\theoremstyle{plain}%
\newtheorem{theorem}{Theorem}[section]
\newtheorem{lemma}[theorem]{Lemma}
\newtheorem{proposition}[theorem]{Proposition}
\theoremstyle{definition}
\newtheorem{definition}[theorem]{Definition}
\newtheorem{example}[theorem]{Example}
\theoremstyle{remark}
\newtheorem{remark}{Remark}
\pgfplotsset{compat=newest}
\begin{document}

\articletype{ARTICLE TEMPLATE}%

\title{Nonlocal diffusion of variable order on complex networks}

\author{
\name{Daniele Bertaccini\textsuperscript{a}\thanks{CONTACT Daniele
Bertaccini. Email: bertaccini@mat.uniroma2.it}\textsuperscript{b}
and F. Durastante\textsuperscript{c}}
\affil{\textsuperscript{a}Universit\`{a} di Roma Tor Vergata,
Dipartimento di Matematica\\ \textsuperscript{b}Consiglio Nazionale
delle Ricerche, Istituto per le Applicazioni del Calcolo \lq\lq M.
Picone\rq\rq, Roma, Italy\\ \textsuperscript{c}Universit\`{a} di
Pisa, Dipartimento di Matematica. Largo Bruno Pontecorvo, 5, 56127
Pisa, Italy} }

\maketitle

\begin{abstract}
Some aspects of nonlocal dynamics on directed and undirected
networks for an initial value problem whose Jacobian matrix is a
variable-order fractional power of a Laplacian matrix are discussed
here. This is a new extension to non-stationary behavior of a class
of non-local phenomena on complex networks for which
both directed and undirected graphs are considered. Under
appropriate assumptions, the existence, uniqueness, and uniform
asymptotic stability of the solutions of the underlying initial
value problem are proved. Some examples giving a sample of the
behavior of the dynamics are also included.
\end{abstract}

\begin{keywords}
network dynamics; nonlocal dynamics; superdiffusion
\end{keywords}

\section{Introduction}

To model the evolution of probability distributions on graphs, two
popular approaches are represented by the heat equation
\begin{equation}\label{eq:heatequation}
\begin{cases}
\mathbf{p}'(t) = -  \mathbf{p}(t) J, & t > 0, \\
\mathbf{p}(0) = \mathbf{p}_0, & \sum_{j=1}^{n} (\mathbf{p}_0)_j = 1,
\end{cases}
\end{equation}
and by the Schr\"odinger equation
\begin{equation}\label{eq:schrodinger}
\begin{cases}
\boldsymbol{\psi}'(t) = - i\,\boldsymbol{\psi}(t) J , & t > 0, \\
\boldsymbol{\psi}(0) = \boldsymbol{\psi}_0, & \sum_{j=1}^{n} |(\boldsymbol{\psi}_0)_j|^2 = 1,
\end{cases}
\end{equation}
where $\mathbf{p}(t)\geq 0$ is a time-dependent probability
distribution in \eqref{eq:heatequation}, and, in
\eqref{eq:schrodinger}, the probability that a particle at time $t$
is at node $v_j \in V$, $V$ the set of all the nodes, is
$|\psi_j(t)|^2/\sum_{j=1}^{n} |\psi_j(t)|^2$. The matrix $J$ is
either the combinatorial Laplacian matrix $L$ associated to the
graph $G=(V,E)$ and to its adjacency and degree matrices $A$ and
$D$, i.e., the singular M-matrix
\begin{equation}\label{eq:combinatorial_laplacian}
L = D - A, \qquad D = \operatorname{diag}(A\mathbf{1}), \quad (A)_{l,j} = a_{l,j} = \begin{cases}
1, & (v_l,v_j) \in E,\\
0, & \text{otherwise}.
\end{cases}
\end{equation}

The solutions of equations~\eqref{eq:heatequation}
and~\eqref{eq:schrodinger} produce a probability
distribution/density at each time step $t$, with the caveat that for
~\eqref{eq:schrodinger} we need to consider the amplitudes
$|\psi_j(t)|^2$, $j=1,...,n$, and model the evolution of the
probability distribution for a random walker on $G$ that moves
between adjacent nodes, that is locally. Recently, there has been an
interest in expanding these navigational strategies to cover also
the evolution of probability distributions on $G$ for walkers that
perform ``long jumps'', i.e., L\'evy flights, on the underlying
graph. To achieve this result, generalizations
of~\eqref{eq:heatequation} and~\eqref{eq:schrodinger} in which the
combinatorial Laplacian matrix is substituted by either its
fractional power $L^{\alpha}$, $\alpha \in (0,1]$, have been
proposed;
see~\cite{BenziBertacciniDurastanteSimunec,RiascosPhysRevE,PhysRevE.102.022142},
or by the generalized $k$-path Laplacian matrix $L_G$;
see~\cite{MR2900722,MR3624678,MR3834211}. In both cases, the choice
of the type of jumps we want our walker to perform on $G$ are
selected to be the same for all times $t$, while it could be more
natural to think about a walker following non-stationary jumps
instead. Sometimes, it can decide to explore the network locally,
some others its attention span is diminished and opts to start
skipping the nodes by performing longer jumps. To model this
behavior, we consider the nonautonomous extensions
of~\eqref{eq:heatequation} and~\eqref{eq:schrodinger} given~by
\begin{equation}\label{eq:nonautonomous_extension}
\begin{cases}
\mathbf{p}'(t) = -\mathbf{p}(t) \mathcal{L}(t) , & t > 0, \\
\mathbf{p}(0) = \mathbf{p}_0,
\end{cases} \quad
\begin{cases}
\boldsymbol{\psi}'(t) = - i\,\boldsymbol{\psi}(t)\mathcal{L}(t) , & t > 0, \\
\boldsymbol{\psi}(0) = \boldsymbol{\psi}_0,
\end{cases}
\end{equation}
where $\mathbf{p}(t)\geq 0$ is a time-dependent probability
distribution, the amplitudes associated with $\boldsymbol{\psi}(t)$
are a time-dependent probability distribution, and
$\mathcal{L}(t)$ is obtained as a time-dependent
extension of the fractional Laplacian~$L^\alpha$.

In this work we aim to analyze this generalization {based on
a variable fractional exponent, potentially able to model better the nonlocal behavior of
the underlying models} and discuss some
theoretical properties of these models. Moreover, numerical
integration methods for these problems are also considered with an
eye to more efficient ways to compute the associated matrix-function
vector products in view of solving much larger problems.

{In the remaining part of the introduction,
Section~\ref{sec:notation_for_graphs_and_newtorks}, we recall
some notations related to graphs for a network; then in Section~\ref{sec:from_non_local_to_nonautonomous}
we discuss the definitions of
the fractional Laplacian matrix and give some information on the
transformed $k$-path Laplacian}. Then, in
Section~\ref{sec:new_proposal}, we introduce our generalization to
get the non-autonomous extension
in~\eqref{eq:nonautonomous_extension} and discuss some of its
properties. Section~\ref{sec:integrating_the_non_autonomous_system} briefly
introduces the problem of the numerical integration of the
underlying non-autonomous systems and the theoretical analysis is
completed with some numerical tests on real-world complex networks.
{Section~\ref{sec:conclusions} summarizes the obtained result,
and highlights some future research directions. }

\subsection{Notation and graphs for a network}
\label{sec:notation_for_graphs_and_newtorks}

A well known efficient and clear way to represent the complex
interactions of a network is through the use of \emph{graph} models.
A \emph{graph} $G$ is defined by a set of \emph{nodes} (or
\emph{vertices} $V=\{v_1,\ldots,v_n\}$ and a set of \emph{edges} $E$
that are a subset of the Cartesian product $E \subseteq V \times V$.
We set $G = (V,E)$. Cartesian products are ordered, thus if $G$ is
an \emph{undirected graph}, we assume that whenever $(v_l,v_j) \in
E$ then $(v_j,v_l) \in E$, otherwise $G$ it is a \emph{directed
    graph}. To avoid repeating this specification, we denote as
$\{v_l,v_j\}$ the unordered pairs.

A weighted (undirected) graph $G =(V,E,W)$ is then obtained by
considering a (symmetric) weight matrix $W$ with nonnegative entries
$(W)_{i,j}=w_{i,j} \geq 0$ and such that
$w_{i,j} > 0$ if and only if $(v_i,v_j)$ is an edge of $G$. If all
the nonzero weights have value $1$ we omit the weight
specification.

We call a \emph{walk} in $G$ a sequence of edges which joins a sequence
of vertices in $V$. if all vertices (and thus all edges) in the walk
are distinct we call it a \emph{path}. In case of a direct graph,
all the edges in a path should point in the same direction. An
undirect graph is \emph{connected} if for each distinct pairs of
nodes there is a walk between them. A directed graph is
\emph{strongly connected} if for each distinct pairs of nodes $v_i$,
$v_j$, there is a direct walk from $v_i$ to $v_j$. The (geodesic)
\emph{distance} $d(u,v)$ between two vertices $u,v \in V$ is defined
as the length of the shortest path connecting them, where the length
of a path is intended as the number of edges crossed. Observe that,
in the direct case, $d(u,v)$ can be different from $d(v,u)$.
Therefore, in that case, $d$ is only a pseudo distance. We call the
\emph{diameter} of the graph $G$ with respect to the geodesic
distance $d$ the quantity $d_{\text{max}} = \max_{u,v \in V}d(u,v)$,
i.e., the length of the longest shortest path.

For a direct and an undirect graph $G$, we introduce the
\emph{adjacency matrix} $A$ as the $n \times n$ matrix with entries
\begin{equation*}
(A)_{i,j} = a_{i,j} = \left\lbrace\begin{array}{cc}
1, & \text{ if }(v_i,v_j) \in E,  \\
0, & \text{ otherwise}.
\end{array}\right.
\end{equation*}
The adjacency matrix $A$ of an undirected graph $G$ is always
symmetric. In particular, if $G = (V,E)$ is a graph,
given two nodes $u,v \in V$, we say that $u$ is adjacent to $v$ and
write $u \sim v$, if $(u,v) \in E$. The above relation is
symmetric if $G$ is an undirected graph, while in general it is not for a
directed graph. Note that for an unweighted graph we have $W=A$.

We introduce also the \emph{incidence matrix} of an undirected graph
as the $|V|\times |E|$ matrix $B$, defined by $B_{ij}=1$ if the
vertex $v_{{i}}$ and edge $e_j$ are incident and 0 otherwise. For
the incidence matrix of a directed graph an arbitrary sign
convention has to be imposed. We assume here that $B_{ij}=-1$ if the
edge $e_{j}$ leaves vertex $v_{{i}}$, 1 if it enters vertex $v_i$
and 0 otherwise. In the weighted case we substitute to the value
$\pm 1$ the weight of the associated edge.

For every node $v \in V$, we introduce also the \textit{degree}
$\deg(v)$ of $v$ as the number of edges leaving \emph{or}
entering~$v$ taking into account their weights
\begin{equation*}
d_i = \deg(v_i) = \sum_{j\,:\, (v_i,v_j) \in E}w_{i,j}.
\end{equation*}
The degree matrix $D$ is then the diagonal matrix whose entries are given by the degrees of the nodes,~i.e.,
\begin{equation*}
\begin{split}
D = & \operatorname{diag}(\deg(v_1),\ldots,\deg(v_n)) =  \operatorname{diag}(d_1,\ldots,d_n).
\end{split}
\end{equation*}
For directed graphs, it is useful to differentiate the degree of a node $v_i$ respectively to
the incoming and outgoing edges. For this reason we consider \textit{in--degrees} and \textit{out--degrees}
\begin{equation*}
d_i^{(\text{in})} = \deg_{\text{in}}(v_i) = \sum_{j\,:\, (v_j,v_i) \in E}w_{j,i}, \qquad d_i^{(\text{out})} = \deg_{\text{out}}(v_i) = \sum_{j\,:\, (v_i,v_j) \in E}w_{i,j},
\end{equation*}
together with the related diagonal matrices
\begin{align*}
D_{\text{in}} =  & \operatorname{diag}(\deg_{\text{in}}(v_1),\ldots,\deg_{\text{in}}(v_n))  =   \operatorname{diag}(d^{(\text{in})}_1,\ldots,d^{(\text{in})}_n),\\ D_{\text{out}} = &  \operatorname{diag}(\deg_{\text{out}}(v_1),\ldots,\deg_{\text{out}}(v_n)) =  \operatorname{diag}(d^{(\text{out})}_1,\ldots,d^{(\text{out})}_n).
\end{align*}

With this notation, we recall some
definitions pertaining to the Laplacian matrix as given in \cite{BenziBertacciniDurastanteSimunec}.

Let $G = (V,E,W)$ be a weighted undirected graph with weight matrix $W$,
weighted degree matrix $D$ and weighted incidence matrix $B$. Then
the \emph{graph Laplacian} $L$ of $G$ is
given by
\[L = D - W = BB^T.\]
The \textit{normalized random walk} version of the graph Laplacian is
\[D^{-1}L = I - D^{-1}W = D^{-1}BB^T,\]
where $I$ is the identity matrix. Observe that $D^{-1}W$ is a row--stochastic matrix,
i.e. it is nonnegative with row sums equal to 1. The \textit{normalized symmetric} version~is
\[D^{-\frac{1}{2}} L D^{-\frac{1}{2}} = I - D^{-\frac{1}{2}} W D^{-\frac{1}{2}}.\]
If $G$ is unweighted then $W=A$ in the above definitions. As in
\cite{BenziBertacciniDurastanteSimunec}, we assume that every vertex
has nonzero degree or that every vertex is not \textit{isolated}.

Let $G = (V,E,W)$ be a weighted directed graph, with degree matrices $D_{\text{out}}$ and
$D_{\text{in}}$. The nonnormalized directed graph Laplacians
$L_{\text{out}}$ and $L_{\text{in}}$ of $G$ are
\begin{equation}\label{eq:in-and-out-laplacians}
L_{\text{out}} = D_{\text{out}} - W, \qquad L_{\text{in}} = D_{\text{in}} - W.
\end{equation}

To define the normalized versions, we need to invert either the
$D_{\text{in}}$ or the $D_{\text{out}}$ matrices, but the absence of
isolated vertices is no longer sufficient to ensure this since there
could be a node with only outgoing or ingoing edges. A first way to
overcome this issue could be imposing that every vertex has at least
one outgoing and one incoming edge, which is rather restrictive.
Otherwise, we could restrict our attention to the subsets of nodes
having an out--degree or in--degree different from zero.

\section{Non-local navigation strategies}
\label{sec:from_non_local_to_nonautonomous}

There exist different approaches to induce a nonlocal probability
transition on the graph $G = (V,E)$. Given the Laplacian matrix $L$
in~\eqref{eq:combinatorial_laplacian}, consider its $\alpha$th power
$L^\alpha$, $\alpha \in (0,1]$, for a symmetric $L$, i.e., for an
undirected $G$. This can be expressed by
decomposition~\cite{RiascosPhysRevE} as
\begin{equation}\label{eq:eigenvaluedecompofL}
L^\alpha = X \Lambda^\alpha X^T,\; \Lambda =
\operatorname{diag}(\lambda_1^\alpha,\ldots,\lambda_n^\alpha), \; 0
= \lambda_1 \leq \lambda_2 \leq \ldots \leq \lambda_n,\; X^TX = I_n.
\end{equation}
For a directed graph, the definition of $f(L) = L^\alpha$ requires
the Jordan canonical form
(see~\cite{BenziBertacciniDurastanteSimunec} for details)
\begin{equation*}
Z^{-1} L Z = J = \operatorname{diag}(J_1,\ldots,J_p), \quad J_k = J_k(\lambda_k) = \begin{bmatrix}
\lambda_k & 1 \\
& \lambda_k & \ddots \\
& & \ddots & 1 \\
& & & \lambda_k
\end{bmatrix} \in \mathbb{C}^{m_k \times m_k},
\end{equation*}
where $Z$ is nonsingular and $m_1 + m_2 + \ldots + m_p = n$, and
\begin{equation}\label{eq:jordanpart1}
f(L) = Z f(J) Z^{-1} = Z \operatorname{diag}(f(J_1),\ldots,f(J_p))Z^{-1},
\end{equation}
\begin{equation}\label{eq:jordanpart2}
f(J_k) = \begin{bmatrix}
f(\lambda_k) & f^{(1)}(\lambda_k) & \ldots & \frac{f^{(m_k-1)}(\lambda_k)}{(m_k-1)!} \\
& f(\lambda_k) & \ddots & \vdots \\
& & \ddots & f^{(1)}(\lambda_k) \\
& & & f(\lambda_k)
\end{bmatrix},
\end{equation}
that needs the function $f(z)=z^\alpha$ to be defined on the
spectrum of $L$, i.e., the values
\begin{equation*}
f^{(j)}(\lambda_i), \qquad j=0,1,\ldots,n_i-1, \quad i = 1,\ldots s,
\end{equation*}
should exist, where $f^{(j)}$ denotes the $j$th derivative of $f$,
with $f^{(0)} = f$; we refer back
to~\cite{BenziBertacciniDurastanteSimunec} for the details and
to~\cite{HighamBook} for the theory of matrix functions. In both the
directed and undirected case we have that $L^\alpha \mathbf{1} = 0$.

\begin{remark}{\emph{$k$-path Laplacian}.}\label{rmk:k-path-laplacian}
    Another approach  for inferring nonlocal transitions on a network is
    represented by the \emph{transformed} $k$-path \emph{Laplacian}. The
    $k$-path Laplacian matrix $L_k$
    ~\cite{MR2900722,10.1093/comnet/cnx043} of a connected undirected
    graph $G=(V,E)$ is related to the $k$-hopping walks and $k$-path
    degree.
    \begin{description}
        \item[$k$-hopping walk:] A $k$-hopping walk of length $l$ is any sequence of (not necessarily
        different) nodes $v_1,\ldots,v_l,v_{l+1}$ such that
        $d(v_{j},v_{j+1})=k$ for each $j=1,2,\ldots,l$;
        \item[$k$-path degree:] The $k$-path degree $\delta_k(v_l)$ of a node $v_l$ is the number of
        irreducible shortest-paths of length $k$ having $v_l$ as an
        endpoint.
    \end{description}
    The $k$-path analogous of the combinatorial Laplacian
    in~\eqref{eq:combinatorial_laplacian} is defined as the square
    symmetric matrix $L_k$
    \begin{equation*}
    (L_k)_{l,j} = \begin{cases}
    -1, & d(v_l,v_j) = k, \\
    \delta_k(v_l), & v_l \equiv v_j, \\
    0, & \text{otherwise}.
    \end{cases}
    \end{equation*}
    To produce the generalized $k$-path Laplacian inducing the nonlocal
    transition probability, we consider the series
    \begin{equation*}
    L_G = L_1 + \sum_{k \geq 2} \frac{1}{k^\alpha} L_k = L + \sum_{k\geq 2} \frac{1}{k^\alpha} L_k, \qquad \alpha \geq 0,
    \end{equation*}
    that is indeed a finite sum since $L_k$ coincides with the zero
    matrix for each $k > d_{\text{max}}$. Observe that again $L_k
    \mathbf{1} = 0$ $\forall k \geq 1$, and thus $L_G \mathbf{1} = 0$.
    For the remaining part of the manuscript, we will focus on the
    properties of the underlying fractional extension. We refer
    to~\cite{Estrada_2021} for some comparison of the two approaches,
    and to~\cite{MR4216832} for a discussion on using different type of
    series and distances on the graph $G$.
\end{remark}

When we substitute $L^\alpha$ into the heat
equation~\eqref{eq:heatequation} or into the Schr\"odinger equation,
the solution at time $t$ can be expressed in terms of the matrix
exponential~as
\begin{align*}
\mathbf{p}(t) = & \mathbf{p}_0 \exp(-t L^\alpha) ,
\\
\boldsymbol{\psi}(t) = & \boldsymbol{\psi}_0 \exp(-t\, i\, L^\alpha) ,
\end{align*}
that produce a probability distribution $\mathbf{p}(t)$ and a
probability density $|\boldsymbol{\psi}(t)|^2$, respectively,
because
\begin{equation*}
\mathbf{p}(t)\mathbf{1} = \mathbf{p}_0\exp(- t L^\alpha)\mathbf{1}   = \mathbf{p}_0\left(I  -t L^\alpha + \mathbf{p}_0 \frac{t^2 L^{2\alpha} }{2} -\frac{t^3 L^{3\alpha}}{6} + \ldots \right)\mathbf{1}= \mathbf{p}_0\mathbf{1} = 1,
\end{equation*}
having used $L^\alpha \mathbf{1} = \mathbf{0}$, and analogously for
the corresponding Schr\"odinger system. The second noteworthy
property of this characterization of the solution is that for the
heat equation we can easily compute the steady state for $t
\rightarrow +\infty$ for a given $\alpha$ on a graph $G$.

Let us recall some useful definitions of stability from
\cite{HaleBook}. The vector field $f$ below is assumed smooth enough
to ensure existence, uniqueness, and continuous dependence on the
parameters but, in our setting, this will concern only $\alpha(t)$
because $f$ is a linear function of $L^{\alpha(t)}$ and $L$ is
constant.
\begin{definition}[Stability]\label{def:stability}
    Let us consider the differential equation
    \begin{eqnarray}
    \mathbf{x}'(t) = f(t,\mathbf{x}), & f:\mathbb{C}^{n+1} \rightarrow
    \mathbb{C}^n,\\
    f(t,0)=0, & t\geq 0.
    \label{eq:stability1}
    \end{eqnarray}
    \begin{itemize}
        \item The solution $\mathbf{x}=0$ is called \emph{stable} if for any $\varepsilon >
        0$ and any $t_0\geq 0$, there is $\delta = \delta(\varepsilon,t_0)>0$
        such that $||\mathbf{x}_0||<\delta$ implies $||\mathbf{x}(t,t_0,\mathbf{x}_0)||<\varepsilon$
        for $t\geq0$.
        \item The solution $\mathbf{x}=0$ is called \emph{uniformly stable} if it
        stable and $\delta$ can be chosen independent of $t_0\geq 0$.
        \item The solution $\mathbf{x}=0$ is called \emph{asymptotically uniformly stable}
        if it uniformly stable, there exists a $b>0$ that for every
        $\eta>0$ there exists a $T(\eta)>0$ such that $||\mathbf{x}_0||<b$ implies
        $||\mathbf{x}(t,t_0,\mathbf{x}_0)||<\eta$ if $t\geq t_0+T(\eta)$.
    \end{itemize}
    The stability of any nonzero solution of the underlying differential
    equation is easily derived from the ones above. See, e.g.,
    \cite{HaleBook} for other details.
\end{definition}
When we deal with the heat equation in~\eqref{eq:heatequation}, and,
in general, with any constant coefficients case, stability depends
only on the eigenvalues of the Jacobian matrix, i.e., $L^{\alpha}$
in the case of fractional diffusion. Since $L^{\alpha}$ is an
M-matrix (see \cite{BenziBertacciniDurastanteSimunec}) for any value
of $\alpha \in (0,1)$, every eigenvalue of $-L^{\alpha}$ have a
nonpositive real part and those with zero real part have a Jordan
block of size one, i.e., they are semisimple. Thus, the dynamic is
automatically uniformly stable. Moreover, for a connected graph $G$,
we~find
\begin{equation*}
\mathbf{p}(t) = \mathbf{y}_0 \exp(-t L^{\alpha}) \rightarrow
\mathbf{1}^T/n, \quad \alpha \in (0,1).
\end{equation*}

\begin{example}
    Consider a simple cycle graph $G$ with $n=4$ nodes, i.e, $V =
    {1,2,3,4}$, $E = \{ \{1,2\},\{2,3\},\{3,4\},\{4,1\} \}$, see
    Figure~\ref{fig:example1} on the left. The fractional Laplacian
    matrix for $G$ can be computed in closed form as
    \begin{equation*}
    L^\alpha = \begin{bmatrix}
    2^{\alpha -2} \left(2^{\alpha }+2\right) & -4^{\alpha -1} & 2^{\alpha -2} \left(2^{\alpha }-2\right) & -4^{\alpha -1} \\
    -4^{\alpha -1} & 2^{\alpha -2} \left(2^{\alpha }+2\right) & -4^{\alpha -1} & 2^{\alpha -2} \left(2^{\alpha }-2\right) \\
    2^{\alpha -2} \left(2^{\alpha }-2\right) & -4^{\alpha -1} & 2^{\alpha -2} \left(2^{\alpha }+2\right) & -4^{\alpha -1} \\
    -4^{\alpha -1} & 2^{\alpha -2} \left(2^{\alpha }-2\right) & -4^{\alpha -1} & 2^{\alpha -2} \left(2^{\alpha }+2\right) \\
    \end{bmatrix},
    \end{equation*}
    while the $k$-path generalized Laplacian matrix reads as
    \begin{equation*}
    L_G = \begin{bmatrix}
    2^{-\alpha }+2 & -1 & -2^{-\alpha } & -1 \\
    -1 & 2^{-\alpha }+2 & -1 & -2^{-\alpha } \\
    -2^{-\alpha } & -1 & 2^{-\alpha }+2 & -1 \\
    -1 & -2^{-\alpha } & -1 & 2^{-\alpha }+2 \\
    \end{bmatrix}.
    \end{equation*}
    These above are indeed essentially different. Indeed, $L^\alpha
    \xrightarrow[]{\alpha \rightarrow 1} L$ and $L_G
    \xrightarrow[]{\alpha \rightarrow +\infty} L$, while for $\alpha
    \rightarrow 0$, $L_G$ converges to the Laplacian matrix of the
    complete graph, and $L^\alpha$ to $1/4$ of it; see
    Figure~\ref{fig:example1}.
    \begin{figure}[htbp]
        \centering
        \begin{tikzpicture}[scale=0.9,baseline=(current bounding box.north)]
        \begin{scope}[every node/.style={circle,thick,draw}]
        \node (A) at (0,2) {$v_1$};
        \node (B) at (-2,0) {$v_2$};
        \node (C) at (0,-2) {$v_3$};
        \node (D) at (2,0) {$v_4$};
        \end{scope}

        \begin{scope}[>={Stealth[black]},
        every node/.style={fill=white,circle},
        every edge/.style={draw=red,thick}]
        \path [<->] (A) edge (B);
        \path [<->] (B) edge (C);
        \path [<->] (C) edge (D);
        \path [<->] (D) edge (A);
        \end{scope}
        \end{tikzpicture}\hfill
        \begin{tikzpicture}[baseline=(current bounding box.north)]
        \begin{axis}[
        width=0.6\columnwidth,
        height=2.1in,
        xmin=0,
        xmax=6.2,
        axis lines = left,
        xlabel = $\alpha$,
        ylabel = {$\lambda_i$},
        y label style={at={(axis description cs:0.13,.5)},anchor=south},
        legend style={at={(1,0.9)}, anchor=north, legend cell align=left, align=left, draw=none,font=\footnotesize}
        ]
        \addplot [
        domain=0:1,
        samples=100,
        color=blue,
        very thick,
        forget plot,
        ]
        {2^x};
        \addplot [
        domain=0:1,
        samples=100,
        color=blue,
        very thick,
        forget plot,
        ]
        {0};
        \addplot [
        domain=0:1,
        samples=100,
        color=blue,
        very thick,
        ]
        {2^(2*x)};
        \addlegendentry{$\lambda(L^\alpha)$}
        \addplot [
        domain=0:6,
        samples=100,
        color=red,
        dashed,
        thick,
        ]
        {4};
        \addlegendentry{$\lambda(L_G)$}
        \addplot [
        domain=0:6,
        samples=100,
        color=red,
        dashed,
        thick,
        ]
        {(2^(1-x))*(2^x+1)};
        \addplot [
        domain=0:6,
        samples=100,
        color=red,
        dashed,
        thick,
        ]
        {0};

        \addplot[mark=x, mark options={solid}, draw=gray!80!black] coordinates{(1,2)(1,4)(1,0)};
        \addplot[mark=x, mark options={solid}, draw=gray!80!black] coordinates{(6,2)(6,4)(6,0)};
        \end{axis}
        \end{tikzpicture}
        \caption{Comparison of the eigenvalues for different values of
            $\alpha$ between the fractional Laplacian $L^\alpha$ and the
            generalized $k$-path Laplacian $L_G$ matrices for the cycle graph on
            the left. The \raisebox{0.25em}{\protect\tikz\protect\draw[very
                thick,blue](0,0)--(0.5,0);} line represents the eigenvalues of the
            $L^\alpha$ for $\alpha \in [0,1]$, while the
            \raisebox{0.24em}{{\protect\tikz\protect\draw[dashed,thick,red](0,0)--(0.5,0);}}
            represents the Eigenvalues for the generalized $k$-path Laplacian
            for $\alpha \in [0,6]$. The \textcolor{gray}{$\times$}
            represents the eigenvalues of the combinatorial Laplacian matrix~$L$.}
        \label{fig:example1}
    \end{figure}
    Moreover, for $t \rightarrow +\infty$ we can look at the asymptotic
    behavior for a given $\alpha$ in
    Figure~\ref{fig:asymptoticstability_example1}, i.e., the stability
    properties of the solution for both the fractional Laplacian
    $L^\alpha$ and the generalized $k$-path Laplacian.
    \begin{figure}[htbp]
        \centering
        \subfloat[With fractional Laplacian matrix $L^\alpha$]{\input{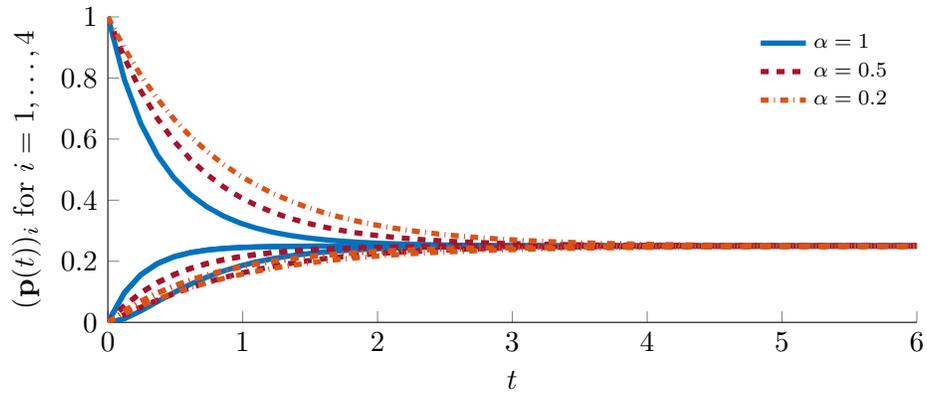}}

        \subfloat[With generalized $k$-path fractional Laplacian matrix]{\input{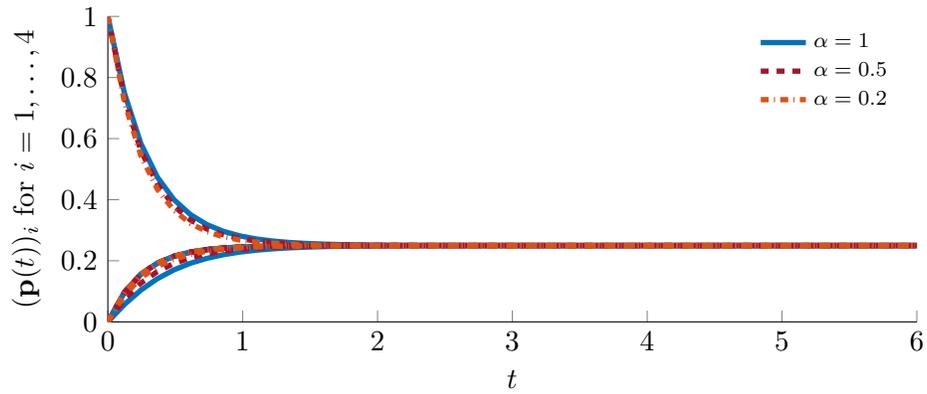}}
        \caption{Comparison of the decay behavior towards the stationary solution
            using the fractional Laplacian $L^\alpha$ and the generalized $k$-path Laplacian.}
        \label{fig:asymptoticstability_example1}
    \end{figure}
    Independently from the value of $\alpha$, we observe that all the
    components of the probability vector reach the stationary solution
    given by the uniformly distributed probability on the nodes.
\end{example}

\begin{remark}
    As discussed in~\cite{2101.00425}, we can interpret the fractional
    navigation strategy as a random walk on a new \emph{complete} graph
    $G'$ that is built on the same nodes of the original graph. Indeed,
    the underlying extension is in general not compatible with the
    dynamics characterizing the original graph model $G$. If one
    restricts the random walks on $G'$ to move on the edges of $G$,
    these are not \emph{stochastically equivalent} to the random walks
    on $G$. In other words, the incompatibility of $G'$ with $G$ means
    that the graph $\hat{G}$ induced by the normalization $D^{-1}L$ can
    not be embedded into the normalized graph $\hat{G}'$ related to
    $D^{-1}_\alpha L^{\alpha}$, for $D_\alpha =
    \operatorname{diag}(L^\alpha)$; refer to the analysis
    in~\cite{2101.00425} for the details.
    For our goals here, this is not an issue.
\end{remark}

\section{Non-local and non-stationary navigation strategies}\label{sec:new_proposal}

We can now modify~\eqref{eq:nonautonomous_extension} by defining the
operator
\begin{equation*}
\mathcal{L}(\alpha(t);t) =  L^{\alpha(t)}, \quad \alpha :  \mathbb{R}_+ \rightarrow (0,1],
\end{equation*}
where we use the function $\alpha(t)$ to modulate the ``quantity of
nonlocality'' at time~$t$ for the probability measures
$\mathbf{p}(t)$ and $|\boldsymbol{\psi}(t)|^2$ over $G$. Such
approach is built by extending the fractional Laplacian matrix to a
variable order, see, e.g.,~\cite{2102.09932,Samko2013,MR2505873}.
Even if this seems a minor change, two main difficulties are
encountered when moving to the non-autonomous setting. In general,
for a system of ordinary differential equations of the form
\begin{equation}\label{eq:generic_ode_problem}
\begin{cases}
\mathbf{y}'(t) = \mathcal{A}(t)\mathbf{y}(t), & t > 0, \\
\mathbf{y}(0) = \mathbf{y}_0,
\end{cases}
\end{equation}
the solution \emph{cannot} be expressed as
\begin{equation*}
\mathbf{y}(t) = \exp(\mathfrak{A}(t)) \mathbf{y}_0,
\end{equation*}
for $\mathfrak{A}(t)$ a primitive of $\mathcal{A}(t)$, consider, e.g., the following example from~\cite{1100529},
\begin{equation*}
\mathcal{A}(t) = \begin{bmatrix}
6 \sin (12 t)-9 \cos ^2(6 t)-1 & \frac{9}{2} \sin (12 t)+6 \cos (12 t)+6 \\
\frac{9}{2} \sin (12 t)+6 \cos (12 t)-6 & -6 \sin (12 t)+\frac{9}{2} \cos (12 t)-\frac{11}{2} \\
\end{bmatrix},
\end{equation*}
for which the solution for $\mathbf{y}_0 = (5,0)^T$ is given by
\begin{equation*}
\mathbf{y}(t) = \begin{bmatrix}
6 \sin (12 t)-9 \cos ^2(6 t)-1 & \frac{9}{2} \sin (12 t)+6 \cos (12 t)+6 \\
\frac{9}{2} \sin (12 t)+6 \cos (12 t)-6 & -6 \sin (12 t)+\frac{9}{2} \cos (12 t)-\frac{11}{2} \\
\end{bmatrix},
\end{equation*}
that is indeed different from $\mathbf{y}(t) = \exp(\mathfrak{A}(t))
\mathbf{y}_0$. Furthermore, we observe also that even if the
eigenvalues of $\mathcal{A}(t)$ are $\lambda_1 = -1$, $\lambda_2 =
-10$ $\forall t$, the solution $\mathbf{x}(t)$ diverges for $t
\rightarrow + \infty$, i.e., all eigenvalues with negative real part
are no longer sufficient to induce the stability of the resulting
system.

The two following sections investigate which properties of the
original systems~\eqref{eq:heatequation} and~\eqref{eq:schrodinger}
can be recovered for the non-autonomous versions using $\alpha$
variable with $t$.

\subsection{{Existence and uniqueness for our non-autonomous extensions}}
Let us analyze first existence and uniqueness of the solution of the
problem whose Jacobian matrix is the fractional power of the
combinatorial Laplacian
\begin{equation}\label{eq:nonautonomousfractionallaplacianproblem}
\begin{cases}
\mathbf{p}'(t) = -\mathbf{p}(t) L^{\alpha(t)}, & t > 0, \\
\mathbf{p}(0) = \mathbf{p}_0, & \sum_{j=1}^{n} (\mathbf{p}_0)_j = 1,
\end{cases} \; \alpha(t) \,:\,\mathbb{R}_+ \rightarrow (0,1].
\end{equation}

\begin{lemma}\label{lem:condition}
    Let $G = (V,E)$ be a simple graph and $\alpha(t) : \mathbb{R}_+
    \rightarrow (0,1]$ a continuous function. Then,
    \begin{enumerate}
        \item $z \mapsto z^{\alpha(t)}$ is defined on the spectrum of the Laplacian matrix $L$ for every
        $t$;
        \item $\exists\, k > 0$ such that $\|L^{\alpha(t)}\|\leq k$ for
        $t>0$;
        \item $f(t,\mathbf{p})=-\mathbf{p} L^{\alpha(t)}$ is Lipschitz continuous with respect to $\mathbf{p}$ for
        $t>0$.
    \end{enumerate}
\end{lemma}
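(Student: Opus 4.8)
The plan is to prove the three claims in order, since claim~3 will follow at once from claim~2, which itself needs claim~1 to guarantee that $L^{\alpha(t)}$ is even defined. Throughout I rely on the spectral picture of $L$: in the undirected case $L=X\Lambda X^T$ is symmetric positive semidefinite with eigenvalues $0=\lambda_1\le\lambda_2\le\cdots\le\lambda_n$ as in~\eqref{eq:eigenvaluedecompofL}, and in general $L$ is a singular M-matrix whose spectrum sits in the closed right half-plane and contains $0$.

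For claim~1, I recall from~\eqref{eq:jordanpart1}--\eqref{eq:jordanpart2} that $f(z)=z^{\alpha(t)}$ is defined on the spectrum of $L$ exactly when $f^{(j)}(\lambda_i)$ exists for $j=0,\ldots,n_i-1$, with $n_i$ the size of the largest Jordan block at $\lambda_i$. For every nonzero eigenvalue the map $z\mapsto z^{\alpha(t)}$ is analytic nearby, so all needed derivatives exist; the only delicate value is $\lambda=0$, where $f(0)=0^{\alpha(t)}=0$ is fine (because $\alpha(t)>0$) but $f'(0)=\alpha(t)\,0^{\alpha(t)-1}$ would diverge for $\alpha(t)<1$. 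I would dispose of this by noting that no derivative at $0$ is ever required: in the undirected case $L$ is symmetric, hence diagonalizable, so every Jordan block has size one and only the values $\lambda_i^{\alpha(t)}$ enter, and in the directed case the same holds as soon as the zero eigenvalue of $L$ is semisimple. Thus $z\mapsto z^{\alpha(t)}$ is defined on the spectrum for every $t$.

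For claim~2 I would work in the spectral norm and extract a bound that does not depend on $t$. In the undirected case $L^{\alpha(t)}=X\Lambda^{\alpha(t)}X^T$ is symmetric positive semidefinite, so $\|L^{\alpha(t)}\|_2=\max_i\lambda_i^{\alpha(t)}=\lambda_n^{\alpha(t)}$. The decisive remark is that the exponent never leaves the fixed set $(0,1]$: for any $\lambda\ge 0$ and any $\alpha\in(0,1]$ one has $\lambda^{\alpha}\le\max(1,\lambda)$, as is seen by treating $\lambda\le 1$ and $\lambda>1$ separately. Choosing $k=\max(1,\lambda_n)$ then yields $\|L^{\alpha(t)}\|_2\le k$ for all $t$, and norm equivalence transfers the bound, with a possibly different constant, to whatever norm is in force. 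In the directed case the same conclusion comes from~\eqref{eq:jordanpart1}: the entries of $f(J_k)$ are the quantities $\binom{\alpha(t)}{j}\lambda_k^{\alpha(t)-j}$, which for positive $\lambda_k$ stay bounded as $\alpha(t)$ ranges over $(0,1]$, so $\|L^{\alpha(t)}\|\le\|Z\|\,\|f(J)\|\,\|Z^{-1}\|$ is again bounded uniformly in $t$.

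Claim~3 is then immediate, since $f(t,\mathbf{p})=-\mathbf{p}L^{\alpha(t)}$ is linear in $\mathbf{p}$: for any $\mathbf{p},\mathbf{q}$,
\begin{equation*}
\|f(t,\mathbf{p})-f(t,\mathbf{q})\|=\|(\mathbf{p}-\mathbf{q})L^{\alpha(t)}\|\le\|L^{\alpha(t)}\|\,\|\mathbf{p}-\mathbf{q}\|\le k\,\|\mathbf{p}-\mathbf{q}\|,
\end{equation*}
so $f$ is Lipschitz in $\mathbf{p}$ with the same $t$-independent constant $k$. I expect the genuine obstacle to sit in claims~1 and~2 at the origin: one must argue that the non-smoothness of $z^{\alpha(t)}$ at $0$ never contributes, which is precisely what semisimplicity of the zero eigenvalue buys, and one must secure a norm bound that does not blow up as $\alpha(t)\to 0^+$ or $\alpha(t)\to 1$, which is exactly what confining the exponent to $(0,1]$ delivers.
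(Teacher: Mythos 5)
Your proof is correct in substance and follows the same spectral skeleton as the paper's, but it is considerably more self-contained: the paper disposes of item~1 by citing \cite[Theorem~2.6]{BenziBertacciniDurastanteSimunec} and of the directed case of item~2 by further citations, whereas you reconstruct both arguments directly from the definition in~\eqref{eq:jordanpart1}--\eqref{eq:jordanpart2}, which is essentially what those citations contain. Your item~2 is in one respect sharper than the paper's: the paper bounds $\|L^{\alpha(t)}\|_2\leq\lambda_n^{M}$ with $M=\max_t\alpha(t)$, an inequality that uses monotonicity of $\alpha\mapsto\lambda_n^{\alpha}$ and therefore tacitly needs $\lambda_n\geq 1$ (true here, since the standing assumption of no isolated vertices gives $\lambda_n\geq 1+\max_i d_i$ once the graph has an edge, but this is left unstated); your bound $\lambda^{\alpha}\leq\max(1,\lambda)$, hence $k=\max(1,\lambda_n)$, needs no such caveat and survives $\lambda_n<1$.

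The one loose end is in item~1 for the directed case. You write that the argument goes through ``as soon as the zero eigenvalue of $L$ is semisimple,'' but the lemma carries no such hypothesis, so as written your directed case is conditional where the paper's (via its citation) is not. The missing fact is true and is exactly the content of the cited theorem, and it can be closed in two lines: for the out-degree Laplacian one has $L\mathbf{1}=\mathbf{0}$ and nonpositive off-diagonal entries, so for $c>\max_i (L)_{ii}$ the matrix $B=I-L/c$ is row-stochastic; then $\|B^m\|_\infty=1$ for all $m$, which forbids a Jordan block of size greater than one at the eigenvalue $1$ of $B$, i.e., at the eigenvalue $0$ of $L$. (Your appeal to analyticity at the nonzero eigenvalues is fine: by Gershgorin all eigenvalues lie in discs $|z-d_i|\leq d_i$, so every nonzero eigenvalue has strictly positive real part and avoids the branch cut of $z^{\alpha}$.) Adding that observation makes your proof complete and fully independent of the external reference.
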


\begin{proof}
    The first item is just a corollary of
    \cite[Theorem~2.6]{BenziBertacciniDurastanteSimunec}, since the
    conditions on the existence of the derivatives are to be considered
    with respect to the $z$ variable, and $\forall\,t > 0$ we have that
    $\alpha(t) \in (0,1]$. For an undirected graph, since the power
    function is monotonically increasing, we bound $\|L^{\alpha(t)}\|_2
    \leq \lambda_{n}^{M} = k$ for $M = \displaystyle \max_{t \in
        \mathbb{R}} \alpha(t)$. For the directed case, and, e.g. the
    out-degree Laplacian, a more involved form for the constant $k$ can
    be obtained \cite[Section~3.2]{BenziBertacciniDurastanteSimunec},
    and \cite[Section~4]{benzi2020rational}. In both cases the obtained
    $k$ can be used as the Lipschitz constant as below
    \[ \| \mathbf{x}L^{\alpha(t)} - \mathbf{y}L^{\alpha(t)} \| \leq k \|\mathbf{x} - \mathbf{y}\|. \]
\end{proof}

\begin{theorem}\label{teo:fractional_Laplacian_existenceuniqueness}
    Under the hypothesis of Lemma~\ref{lem:condition}, there exists a
    unique solution for the Cauchy
    problem~\eqref{eq:nonautonomousfractionallaplacianproblem} for
    $t>0$.
\end{theorem}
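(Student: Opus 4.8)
The plan is to recognize that \eqref{eq:nonautonomousfractionallaplacianproblem} is a \emph{linear} non-autonomous Cauchy problem, $\mathbf{p}'(t)=\mathbf{p}(t)A(t)$ with coefficient matrix $A(t):=-L^{\alpha(t)}$, and then to invoke the classical existence–uniqueness theory for linear systems (Picard–Lindel\"of / Carath\'eodory, as in \cite{HaleBook}). Concretely, the vector field $f(t,\mathbf{p})=-\mathbf{p}L^{\alpha(t)}$ is linear, hence smooth, in $\mathbf{p}$, so the only hypotheses to verify are that $f$ be continuous in $t$ and Lipschitz in $\mathbf{p}$ with a constant that does not blow up. The second of these is already Lemma~\ref{lem:condition}, item~3, which furnishes a Lipschitz constant $k$ that is \emph{uniform} in $t$ by virtue of the bound $\|L^{\alpha(t)}\|\le k$ of item~2.

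First I would establish the continuity of $t\mapsto A(t)=-L^{\alpha(t)}$ on $\mathbb{R}_+$. By item~1 of Lemma~\ref{lem:condition}, $z\mapsto z^{\alpha(t)}$ is defined on the spectrum of $L$ for every $t$, so $L^{\alpha(t)}$ is well defined through \eqref{eq:eigenvaluedecompofL} in the undirected case and \eqref{eq:jordanpart1}--\eqref{eq:jordanpart2} in the directed case. Since $L$ is a fixed matrix with fixed eigenvalues and fixed Jordan structure, the matrix $L^{\alpha}$ depends on the exponent only through the finitely many scalar quantities $f^{(j)}(\lambda_i)$ with $f(z)=z^{\alpha}$. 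For the nonzero eigenvalues $\lambda_i>0$ these values, and their derivatives, are jointly continuous in $(\alpha,z)$ for $\alpha\in(0,1]$; the zero eigenvalue $\lambda_1=0$ is semisimple and is handled by $L^{\alpha}\mathbf{1}=\mathbf{0}$, so no derivative of $z^{\alpha}$ at the origin is required. Hence $\alpha\mapsto L^{\alpha}$ is continuous on $(0,1]$, and composing with the continuous function $\alpha(t)$ gives continuity of $t\mapsto L^{\alpha(t)}$, so that $f$ is jointly continuous in $(t,\mathbf{p})$.

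With these two facts the Picard–Lindel\"of theorem yields a unique local solution. Because the Lipschitz constant $k$ is global and independent of $t$, a Gr\"onwall estimate gives the a priori bound $\|\mathbf{p}(t)\|\le\|\mathbf{p}_0\|\,e^{kt}$, which rules out finite-time blow-up and allows the local solution to be continued uniquely to the whole half-line $t>0$. I therefore expect the existence and uniqueness for \eqref{eq:nonautonomousfractionallaplacianproblem} to follow directly once the continuity of $t\mapsto L^{\alpha(t)}$ is in hand.

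The main obstacle is precisely this continuity step, and specifically the behavior at the zero eigenvalue of $L$: a naive appeal to continuity of $z\mapsto z^{\alpha}$ and its derivatives fails at $z=0$. The resolution is to use the semisimplicity of the zero eigenvalue (its Jordan blocks have size one) together with $L^{\alpha}\mathbf{1}=\mathbf{0}$, so that the problematic derivatives of $z^{\alpha}$ at the origin never enter the Jordan formula \eqref{eq:jordanpart2}; everything else reduces to continuity on the strictly positive part of the spectrum, where it is routine.
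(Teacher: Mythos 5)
Your proposal is correct and takes essentially the same route as the paper: the paper's proof simply observes that \eqref{eq:nonautonomousfractionallaplacianproblem} is a linear homogeneous system whose coefficient matrix $L^{\alpha(t)}$ has entries continuous in $t$, and invokes classical existence--uniqueness theory for such systems (Coddington--Levinson, Theorem~5.1). Your write-up fills in the details the paper leaves implicit---the continuity of $t\mapsto L^{\alpha(t)}$ via the spectral/Jordan representation (with the semisimple zero eigenvalue handled separately) and the Gr\"onwall-based global continuation using the uniform Lipschitz constant from Lemma~\ref{lem:condition}---so it is a more detailed rendering of the same argument rather than a different one.
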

\begin{proof}
The system in \eqref{eq:nonautonomousfractionallaplacianproblem} is linear homogeneous with $(L^{\alpha(t)})_{i,j} = l_{i,j}(t) \in \mathcal{C}^0$ a matrix of continuous real functions in $t$. This is indeed a classical case under which existence and uniqueness are guaranteed by, e.g., \cite[Theorem 5.1]{coddington-levinson} or any other book covering fundamentals on ordinary differential equations.
\end{proof}
To build explicitly the solution of
\eqref{eq:nonautonomousfractionallaplacianproblem}, a commutativity
result is of fundamental importance. Indeed, if the matrix
$L^{\alpha(t)}$ commutes with its antiderivative then we can express
the solution by using the exponential of the antiderivative of
$L^{\alpha(t)}$.

\begin{proposition}\label{pro:fractional_Laplacian_commutativity}
    Given a graph $G=(V,E)$, let $L$ be its combinatorial Laplacian
    matrix in~\eqref{eq:combinatorial_laplacian} and $\alpha :
    \mathbb{R}^+ \rightarrow (0,1]$ a continuous function. Then, the
    matrices $-L^{\alpha(t)}$ and $\exp(-\mathfrak{L}(t)) = \exp(-
    \int_0^{t} L^{\alpha(\tau)}\,d\,\tau)$ commute.
\end{proposition}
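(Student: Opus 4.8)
The plan is to exploit that every matrix $L^{\alpha(t)}$ is a function of the single, time-independent matrix $L$, so that all such matrices — and hence their antiderivative — lie inside the commutative algebra generated by $L$. The argument then reduces to three steps: pairwise commutativity of the $L^{\alpha(\tau)}$, transport of this property through the integral defining $\mathfrak{L}(t)$, and finally through the exponential.

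First I would record the elementary fact that for any two exponents $s,\tau>0$ the matrices $L^{\alpha(s)}$ and $L^{\alpha(\tau)}$ commute. In the undirected case this is immediate from the eigendecomposition~\eqref{eq:eigenvaluedecompofL}: writing $L^{\alpha(s)}=X\Lambda^{\alpha(s)}X^T$ with the \emph{fixed} orthogonal factor $X$, the products collapse to products of diagonal matrices, which commute. In the directed case the same conclusion follows from \eqref{eq:jordanpart1}--\eqref{eq:jordanpart2}, since $L^{\alpha(s)}=Z f_s(J)Z^{-1}$ and $L^{\alpha(\tau)}=Z f_\tau(J)Z^{-1}$ share the fixed transforming matrix $Z$, and functions of the same Jordan form commute blockwise (each $f_s(J_k)$ is upper-triangular Toeplitz, and these form a commutative family). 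Equivalently, by the theory of matrix functions each $L^{\alpha(s)}$ is a polynomial in $L$, and polynomials in a common matrix always commute.

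Next I would push this commutativity through the antiderivative. Fix $t$ and set $M=L^{\alpha(t)}$; the map $\Phi(N)=MN-NM$ is linear and bounded on $\mathbb{C}^{n\times n}$. Since $\Phi\bigl(L^{\alpha(\tau)}\bigr)=0$ for every $\tau$ by the previous step, interchanging $\Phi$ with the (Riemann) integral gives
\begin{equation*}
\Phi\bigl(\mathfrak{L}(t)\bigr)=\Phi\!\left(\int_0^{t}L^{\alpha(\tau)}\,d\tau\right)=\int_0^{t}\Phi\bigl(L^{\alpha(\tau)}\bigr)\,d\tau=0,
\end{equation*}
so $\mathfrak{L}(t)$ commutes with $L^{\alpha(t)}$. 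The interchange is legitimate because $\Phi$ is a bounded linear operator and $\tau\mapsto L^{\alpha(\tau)}$ is continuous — $\alpha$ is continuous and the spectral/Jordan data of $L$ are fixed — hence Riemann integrable.

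Finally, since $\mathfrak{L}(t)$ commutes with $L^{\alpha(t)}$, so does every power $\mathfrak{L}(t)^k$ and therefore the norm-convergent series $\exp(-\mathfrak{L}(t))=\sum_{k\geq0}(-\mathfrak{L}(t))^k/k!$. This yields $L^{\alpha(t)}\exp(-\mathfrak{L}(t))=\exp(-\mathfrak{L}(t))\,L^{\alpha(t)}$, and the same identity holds with the sign flipped on $-L^{\alpha(t)}$, which is exactly the claim. The only point requiring genuine care is the directed, non-diagonalizable case: there one cannot argue via orthogonal diagonalization, and I would either invoke the polynomial-in-$L$ representation or verify the upper-triangular Toeplitz commutativity on each Jordan block. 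Everything else is routine, so I expect the write-up to be short once the common-function-of-$L$ viewpoint is made explicit.
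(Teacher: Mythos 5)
Your proof is correct, and it is organized differently from the paper's. The paper proves the identity by a direct computation: it conjugates by the fixed transform ($X$ for the symmetric case, $Z$ for the Jordan form in the directed case), pulls that transform out of the exponential of the integral, and then reduces commutativity to that of diagonal matrices (undirected) or of upper-triangular Toeplitz Jordan blocks (directed), citing the fact that such Toeplitz matrices are polynomials in a common matrix. Your route is more modular: you first establish pairwise commutativity of the family $\{L^{\alpha(\tau)}\}_{\tau}$ (by the same decompositions, or by the cleaner observation that every matrix function of $L$ is a polynomial in $L$, which handles the diagonalizable and non-diagonalizable cases uniformly), then transport commutativity through the integral via the bounded linear commutator map $\Phi(N)=MN-NM$, and finally through the exponential via its power series. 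What the paper's computation buys is concreteness and self-containedness -- one sees the cancellation happen explicitly. What your argument buys is generality and robustness: it never needs to push a similarity transform inside $\exp(-\int_0^t \cdot\, d\tau)$, it treats directed and undirected graphs in one stroke, and it isolates exactly where continuity of $\tau\mapsto L^{\alpha(\tau)}$ is used (to justify $\Phi\circ\int = \int\circ\,\Phi$). In fact, your ``common-function-of-$L$'' viewpoint is precisely the generalization the paper itself points to in the remark following the theorem, where it notes the result extends to other matrix functions of a constant $L$ depending on a parameter.
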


\begin{proof}
    For an undirected graph $G$, $L$ is a symmetric matrix, then we can
    compute $-L^{\alpha(t)}$ as in~\eqref{eq:eigenvaluedecompofL}. Thus,
    \begin{align*}
    -L^{\alpha(t)} \exp(-\mathfrak{L}(t)) = &\, -X \Lambda^{\alpha(t)} X^T  \exp\left( -\int_0^{t} X\Lambda^{\alpha(\tau)} X^T \,d\,\tau \right)\\  = &\, X \left( -\Lambda^{\alpha(t)} \exp\left(-\int_0^{t} \Lambda^{\alpha(\tau)}\,d\,\tau\right) \right) X^T \\
    = &\, X \left( -\exp\left(\int_0^{t} -\Lambda^{\alpha(\tau)}\,d\,\tau\right) \Lambda^{\alpha(t)}  \right) X^T \\ = &\, -X  \exp\left(-\int_0^{t} \Lambda^{\alpha(\tau)}\,d\,\tau\right) X^T X \Lambda^{\alpha(t)} X^T \\ = &  -\exp(-\mathfrak{L}(t)) L^{\alpha(t)}.
    \end{align*}
    For a directed graph, we need to consider the definition trough the
    Jordan canonical form
    in~\eqref{eq:jordanpart1}-\eqref{eq:jordanpart2}, i.e., we can
    rewrite $-L^{\alpha(t)} \exp(\mathfrak{L}(t))$ as
    \begin{align*}
    -L^{\alpha(t)} \exp(\mathfrak{L}(t)) = & -Z J^{\alpha(t)} Z^{-1} \exp\left(-\int_0^{t} Z  J^{\alpha(\tau)}  Z^{-1}\,d\,\tau \right)\\
    = & Z \left(-J^{\alpha(t)} \exp\left( -\int_{0}^{t} J^{\alpha(t)}\,d\,\tau \right)\right) Z^{-1},
    \end{align*}
    by using Lemma~\ref{lem:condition}, and observing that $f(t) =
    \lambda_k^{\alpha(t)}$ is integrable. Now $\exp(J^{\alpha(t)})$ is a
    matrix exponential of a block-diagonal matrix, thus it is block
    diagonal itself, and the product of the block-diagonal matrix
    commutes if and only if the product of the blocks commutes.
    Therefore, we can reduce the previous computation on a generic
    Jordan block $J_k$. By construction, each block is an upper
    triangular Toeplitz matrix, i.e., is a matrix with constant entries
    along the diagonals. The conclusion follows from the fact that the
    product of upper triangular Toeplitz matrices is commutative because
    it can be expressed as the product of two matrix polynomials for the
    same matrix; see~\cite{bini1994polynomial} and \cite[Chapter
    2]{bertaccini2013complessita}.
\end{proof}

\begin{theorem}\label{thm:solution_of_nonautonomous_heat}
    Given a graph $G=(V,E)$, let $L$ be its combinatorial Laplacian
    matrix in~\eqref{eq:combinatorial_laplacian} and $\alpha :
    \mathbb{R}^+ \rightarrow (0,1]$ a continuous function. Then, the
    problem in~\eqref{eq:nonautonomousfractionallaplacianproblem} admits
    a solution $\mathbf{p}(t)$ on every interval $[0,T]$ and
    $\|\mathbf{p}(t)\|_1  = 1$ $\forall t \in [0,T]$.
\end{theorem}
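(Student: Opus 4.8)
The plan is to produce the solution in closed form and then read off both assertions directly from that formula. Existence of a (unique) solution on every interval $[0,T]$ is already granted by Theorem~\ref{teo:fractional_Laplacian_existenceuniqueness}, so the real task is to exhibit it explicitly and to verify the $\ell^1$ normalization. First I would set $\mathfrak{L}(t)=\int_0^t L^{\alpha(\tau)}\,d\tau$, which is well defined and finite because Lemma~\ref{lem:condition} bounds $\|L^{\alpha(t)}\|$ uniformly in $t$, and propose
\[
\mathbf{p}(t)=\mathbf{p}_0\exp(-\mathfrak{L}(t))
\]
as the candidate solution, valid for all $t\geq 0$ and hence on every $[0,T]$.

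To confirm that this is the solution I would differentiate. The only delicate point is that $\frac{d}{dt}\exp(-\mathfrak{L}(t))=-L^{\alpha(t)}\exp(-\mathfrak{L}(t))$ holds precisely when $L^{\alpha(t)}$ commutes with $\exp(-\mathfrak{L}(t))$, which is exactly the conclusion of Proposition~\ref{pro:fractional_Laplacian_commutativity}. Granting this, $\mathbf{p}'(t)=-\mathbf{p}_0\exp(-\mathfrak{L}(t))L^{\alpha(t)}=-\mathbf{p}(t)L^{\alpha(t)}$ with $\mathbf{p}(0)=\mathbf{p}_0$, so the candidate indeed solves~\eqref{eq:nonautonomousfractionallaplacianproblem}. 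Conservation of total mass then follows from $L^{\alpha(\tau)}\mathbf{1}=0$: integrating gives $\mathfrak{L}(t)\mathbf{1}=0$, so every term past the identity in the exponential series annihilates $\mathbf{1}$ and $\exp(-\mathfrak{L}(t))\mathbf{1}=\mathbf{1}$, whence
\[
\mathbf{p}(t)\mathbf{1}=\mathbf{p}_0\exp(-\mathfrak{L}(t))\mathbf{1}=\mathbf{p}_0\mathbf{1}=\sum_{j=1}^{n}(\mathbf{p}_0)_j=1 .
\]

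This only shows that the entries of $\mathbf{p}(t)$ sum to $1$; to upgrade the row sum to $\|\mathbf{p}(t)\|_1=1$ I would establish nonnegativity of $\mathbf{p}(t)$. Since $L^{\alpha(\tau)}$ is an M-matrix its off-diagonal entries are nonpositive, so $-L^{\alpha(\tau)}$ has nonnegative off-diagonals for each $\tau$, and integrating preserves this sign pattern entrywise, making $-\mathfrak{L}(t)$ an essentially nonnegative (Metzler) matrix. Writing $-\mathfrak{L}(t)=B(t)-s(t)I$ with $B(t)\geq 0$ entrywise and $s(t)$ chosen large enough, one gets $\exp(-\mathfrak{L}(t))=e^{-s(t)}\exp(B(t))\geq 0$ entrywise; since the initial datum is a probability vector, $\mathbf{p}_0\geq 0$, and therefore $\mathbf{p}(t)=\mathbf{p}_0\exp(-\mathfrak{L}(t))\geq 0$. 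Combining nonnegativity with the preserved row sum yields $\|\mathbf{p}(t)\|_1=\mathbf{p}(t)\mathbf{1}=1$ on $[0,T]$.

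I expect the nonnegativity step to be the main obstacle, since existence and mass conservation reduce almost immediately to Proposition~\ref{pro:fractional_Laplacian_commutativity} and the identity $L^{\alpha(\tau)}\mathbf{1}=0$. The subtle part is that the M-matrix structure of each individual $L^{\alpha(\tau)}$ must be shown to survive both the time integration (so that $-\mathfrak{L}(t)$ remains Metzler) and the passage to the matrix exponential; the shift-by-$s(t)I$ argument is the natural way to handle the latter, and one must also make explicit that $\mathbf{p}_0\geq 0$ is part of the standing assumptions inherited from~\eqref{eq:heatequation}.
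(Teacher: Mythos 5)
Your proposal follows the paper's own proof essentially step for step: existence from Theorem~\ref{teo:fractional_Laplacian_existenceuniqueness}, the closed form $\mathbf{p}(t)=\mathbf{p}_0\exp(-\mathfrak{L}(t))$ justified by the commutativity established in Proposition~\ref{pro:fractional_Laplacian_commutativity}, and mass conservation from $\mathfrak{L}(t)\mathbf{1}=\int_0^t L^{\alpha(\tau)}\mathbf{1}\,d\tau=\mathbf{0}$. The one place you go beyond the paper is the nonnegativity step: the paper's proof simply writes $\|\mathbf{p}(t)\|_1=\mathbf{p}(t)\mathbf{1}$, which tacitly assumes $\mathbf{p}(t)\geq 0$ and never argues it. Your Metzler-matrix argument --- each $-L^{\alpha(\tau)}$ has nonnegative off-diagonal entries because $L^{\alpha(\tau)}$ is an M-matrix, integration preserves this sign pattern entrywise, and the shift $-\mathfrak{L}(t)=B(t)-s(t)I$ with $B(t)\geq 0$ gives $\exp(-\mathfrak{L}(t))=e^{-s(t)}\exp(B(t))\geq 0$ --- is correct and closes exactly that gap, granted $\mathbf{p}_0\geq 0$, which, as you rightly note, is implicit in the probabilistic setting of~\eqref{eq:heatequation} even though it is not written explicitly in~\eqref{eq:nonautonomousfractionallaplacianproblem}. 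So your proof is not a different route; it is the paper's argument made complete at the point where the $1$-norm is identified with the row sum.
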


\begin{proof}
    The proof follows from Theorem
    \ref{teo:fractional_Laplacian_existenceuniqueness} and from the
    property that the matrix $L^{\alpha(t)}$ commutes with its
    antiderivative by Proposition
    \ref{pro:fractional_Laplacian_commutativity}. Indeed, we can express
    the solution in closed form with the exponential of the
    antiderivative of $L^{\alpha(t)}$, i.e., the solution
    of~\eqref{eq:nonautonomousfractionallaplacianproblem} can be
    expressed as
    \begin{equation*}
    \mathbf{p}(t) = \mathbf{p}_0\exp\left(-\int_{0}^{t}L^{\alpha(\tau)}\,d\,\tau\right),
    \end{equation*}
    whenever
    \begin{equation*}
    L^{\alpha(t)} \int_{0}^{t} L^{\alpha(\tau)}\,d\,\tau - \int_{0}^{t}
    L^{\alpha(\tau)}\,d\,\tau  L^{\alpha(t)} = 0,
    \end{equation*}
    that is exactly what we proved in
    Proposition~\ref{pro:fractional_Laplacian_commutativity}. Then, to
    prove that
    \begin{equation*}
    \|\mathbf{p}(t)\|_1 = \mathbf{p}(t)\mathbf{1} = 1,
    \end{equation*}
    we have only to prove that $\mathfrak{L}(t)\mathbf{1} = \mathbf{0}$
    $\forall t \geq 0$, and indeed
    \begin{equation*}
    \mathfrak{L}(t)\mathbf{1} = \int_0^{t} L^{\alpha(\tau)}\,d\,\tau
    \mathbf{1} = \int_0^{t} L^{\alpha(\tau)} \mathbf{1}\,d\,\tau =
    \int_{0}^{t} \mathbf{0}\,d\,\tau = \mathbf{0}.
    \end{equation*}
\end{proof}

\begin{remark}\label{rmk:matrix_function_are_simple}
    The proof of
    Proposition~\ref{pro:fractional_Laplacian_commutativity}, and,
    therefore, of Theorem~\ref{thm:solution_of_nonautonomous_heat},
    depends on one hand on the fact that all the matrices involved are
    diagonalized by the same transform that is independent of $t$, and,
    on the other, the Jordan blocks are upper triangular Toeplitz
    matrices. These properties are inherited from the definition
    in~\eqref{eq:jordanpart1}-\eqref{eq:jordanpart2}. Therefore, both
    the results can be extended to other matrix functions with constant
    L depending on a variable parameter.
\end{remark}

\subsection{{Stability for our non-autonomous extensions}}

Let us discuss the stability
of~\eqref{eq:nonautonomousfractionallaplacianproblem}. The classical
definitions of stability and asymptotic stability, due to Lyapunov,
can be useful for the study of autonomous differential equations.
For our nonautonomous equations, however, the concepts of uniform
stability and uniform asymptotic stability are more appropriate; see
W. A. Coppel \cite[Chapter 1]{coppel1978dichotomies}.

A special case is when $\mathcal{L}(t)$ is a $T$-periodic function
for which it is possible to apply Floquet Theorem, see, e.g.,
\cite[Chapter III.7]{HaleBook}.

\begin{theorem}[Floquet]
    Every fundamental matrix solution $P(T)$ of
    \begin{equation}\label{eq:floquet_matrix}
    P'(t) = - P(t) \mathcal{L}(t), \, t > 0, \; \exists\,T > 0\, :\, \mathcal{L}(t + T) = \mathcal{L}(t), \forall\,t>0,
    \end{equation}
    has the form
    \begin{equation}\label{eq:floquet_solution}
    P(t) = X(t) e^{B t},
    \end{equation}
    where $X(t)$, $B$ are $n \times n$ matrices, $X(t+T)=X(t)$ for all $t$, and $B$ is a constant.
\end{theorem}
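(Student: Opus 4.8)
The plan is to build the factorization \eqref{eq:floquet_solution} constructively from the closed form already available in this setting, rather than from an abstract monodromy argument for a generic periodic coefficient. The decisive feature I will exploit is that here $\mathcal{L}(t)=L^{\alpha(t)}$ is, for every $t$, a function of the \emph{single} fixed matrix $L$; hence the family $\{L^{\alpha(\tau)}\}_\tau$ is mutually commuting, and, by Proposition~\ref{pro:fractional_Laplacian_commutativity} and Theorem~\ref{thm:solution_of_nonautonomous_heat}, every fundamental matrix solution of \eqref{eq:floquet_matrix} can be written as
\begin{equation*}
P(t) = P_0\,\exp\!\left(-\mathfrak{L}(t)\right), \qquad \mathfrak{L}(t) = \int_0^t L^{\alpha(\tau)}\,d\tau,
\end{equation*}
with $P_0 = P(0)$ invertible and $\mathfrak{L}(t)$ commuting with $L^{\alpha(s)}$ for all $s,t$. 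This commutativity is precisely what forces the exponential onto the \emph{right} in \eqref{eq:floquet_solution}, and it is the point that a generic (noncommuting) monodromy argument cannot deliver.

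First I would record the monodromy relation. Using only the $T$-periodicity of $\mathcal{L}$ in \eqref{eq:floquet_matrix}, the change of variable $\tau\mapsto\tau+T$ yields
\begin{equation*}
\mathfrak{L}(t+T) = \int_0^{T} L^{\alpha(\tau)}\,d\tau + \int_0^{t} L^{\alpha(\tau)}\,d\tau = \mathfrak{L}(T) + \mathfrak{L}(t).
\end{equation*}
Since $\mathfrak{L}(T)$ and $\mathfrak{L}(t)$ commute, their exponentials factor and may be reordered, so that
\begin{equation*}
P(t+T) = P_0\,e^{-\mathfrak{L}(T)}\,e^{-\mathfrak{L}(t)} = P_0\,e^{-\mathfrak{L}(t)}\,e^{-\mathfrak{L}(T)} = P(t)\,C, \qquad C := e^{-\mathfrak{L}(T)}.
\end{equation*}
This is the \emph{right}-sided monodromy $P(t+T)=P(t)C$, which is exactly the form needed to produce $P(t)=X(t)e^{Bt}$.

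Next I would set $B := -\tfrac{1}{T}\,\mathfrak{L}(T)$, so that $C=e^{TB}$ holds by construction; note that, unlike in the general Floquet proof, no appeal to the existence of a matrix logarithm of an arbitrary invertible matrix (nor any period-doubling to stay real) is needed, because the closed form already supplies an explicit logarithm of the monodromy matrix. I then define $X(t):=P(t)\,e^{-Bt}$ and verify periodicity directly:
\begin{equation*}
X(t+T) = P(t+T)\,e^{-B(t+T)} = P(t)\,C\,e^{-Bt}\,e^{-BT} = P(t)\,e^{TB}\,e^{-Bt}\,e^{-TB} = P(t)\,e^{-Bt} = X(t),
\end{equation*}
using $C=e^{TB}$ and that powers of $B$ commute. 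Rearranging $X(t)=P(t)e^{-Bt}$ gives $P(t)=X(t)e^{Bt}$ with $X(t+T)=X(t)$ and $B$ constant, which is \eqref{eq:floquet_solution}.

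The part demanding the most care is the ordering of the factors, precisely where a monodromy argument for a generic periodic coefficient stalls: for an arbitrary $\mathcal{L}(t)$ one obtains instead the left-sided relation $P(t+T)=CP(t)$, which yields $P(t)=e^{Bt}X(t)$ and \emph{not} the stated form. The resolution here is structural and specific to the model, namely that every $L^{\alpha(\tau)}$ is a function of the fixed Laplacian $L$: the integral $\mathfrak{L}(T)$ then commutes with $\mathfrak{L}(t)$, the two exponentials may be swapped, the monodromy becomes two-sided, and its right-sided reading produces \eqref{eq:floquet_solution} exactly. I would close by remarking that this same commutativity makes $B$ a function of $L$, so that $B$ and $X(t)$ in fact commute and the side on which $e^{Bt}$ is written is immaterial in this application, while \eqref{eq:floquet_solution} is the form stated.
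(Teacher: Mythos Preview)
The paper does not prove this statement at all: it is the classical Floquet theorem, quoted verbatim from Hale's textbook (see the reference to \cite[Chapter~III.7]{HaleBook} immediately preceding the theorem). There is therefore no ``paper's own proof'' to compare against; the theorem is stated as background for the stability discussion that follows.

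Your argument is correct, but it is important to be clear about its scope. You do not prove Floquet's theorem as stated; you prove it only under the additional hypothesis $\mathcal{L}(t)=L^{\alpha(t)}$, where everything commutes and the closed-form fundamental solution from Theorem~\ref{thm:solution_of_nonautonomous_heat} is available. You are entirely upfront about this, and in the context of the paper that special case is all that is ever used. Your observation about the factor ordering is sharp: for the row-type equation $P'=-P\mathcal{L}$ with a generic periodic $\mathcal{L}$, the monodromy relation one gets is $P(t+T)=CP(t)$ (left multiplication by constants preserves solutions, right multiplication does not), which naturally yields $P(t)=e^{Bt}X(t)$ rather than the stated $P(t)=X(t)e^{Bt}$. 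Your resolution---that in this paper's setting $B$, $X(t)$, and $\mathcal{L}(t)$ are all functions of the single matrix $L$ and hence commute, so the two orderings coincide---is exactly right, and is a point the paper leaves implicit.
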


The stability studied by the \emph{characteristic exponent}
$\lambda$ in~\eqref{eq:floquet_matrix}, i.e., the complex number
$\lambda$ for which $x(t)e^{\lambda t}$ is a nontrivial solution
of~\eqref{eq:floquet_matrix} with $\mathbf{x}(t) = \mathbf{x}(t +
T)$, implies that there exists a representation of the
solution~\eqref{eq:floquet_solution} for which the values of
$\lambda$ are the eigenvalues of $B$ in~\eqref{eq:floquet_solution}.

\begin{theorem}[{\cite[Theorem 7.2]{HaleBook}}]\leavevmode\label{thm:stability_condition}
    \begin{enumerate}
        \item A necessary and sufficient condition for the system~\eqref{eq:floquet_matrix} to be uniformly stable is
        that the characteristic exponents have real parts $\leq 0$ and the ones with zero real parts have simple elementary divisors.
        \item A necessary and sufficient condition for the system~\eqref{eq:floquet_matrix} to be uniformly
        asymptotically stable is that all the characteristic exponents have
        real parts $< 0$. If this is the case and $P(t)$ is a matrix
        solution of~\eqref{eq:floquet_matrix}, then there exist $K > 0$,
        $\eta
        > 0$ such that $\|P(t)P^{-1}(s)\| \leq K \exp(-\eta(t-s))$, for $t
        \geq s$.
    \end{enumerate}
\end{theorem}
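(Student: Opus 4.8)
The plan is to exploit the Floquet representation $P(t) = X(t)e^{Bt}$ just established, which reduces the stability analysis of the periodic system~\eqref{eq:floquet_matrix} to the constant-coefficient behaviour of $e^{Bt}$. First I would record that, since $X(t)$ is continuous and $T$-periodic and invertible (because $P(t)$ is a fundamental matrix and $e^{Bt}$ is invertible), the quantities $\sup_t \|X(t)\|$ and $\sup_t \|X^{-1}(t)\|$ are finite: a continuous matrix function with continuous inverse is bounded with bounded inverse on the compact period $[0,T]$, and periodicity extends these bounds to all $t$. The transition matrix appearing in the statement then factors as
\[
P(t)P^{-1}(s) = X(t)\,e^{B(t-s)}\,X^{-1}(s),
\]
so for $t \geq s$ its norm is squeezed between constant multiples of $\|e^{B(t-s)}\|$. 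Because the magnitude of any solution issued at time $s$ is controlled by $\|P(t)P^{-1}(s)\|$, the $\varepsilon$--$\delta$ conditions of Definition~\ref{def:stability} become equivalent to uniform bounds on $\|e^{B\tau}\|$ for $\tau \geq 0$.

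Next I would put $B$ in Jordan canonical form $B = S\,\operatorname{diag}(J_1,\ldots,J_p)\,S^{-1}$, so that $e^{Bt} = S\,\operatorname{diag}(e^{J_1 t},\ldots,e^{J_p t})\,S^{-1}$. For a block $J_k$ of size $m_k$ attached to an eigenvalue $\lambda_k$ of $B$, which is a characteristic exponent by the discussion preceding the theorem, the entries of $e^{J_k t}$ are $e^{\lambda_k t}$ times polynomials in $t$ of degree at most $m_k - 1$. Three regimes arise: if $\Re\lambda_k < 0$ the block decays exponentially and the polynomial factor is harmless; if $\Re\lambda_k = 0$ the block stays bounded precisely when $m_k = 1$, i.e.\ when the elementary divisor is simple, and grows polynomially, hence unboundedly, when $m_k \geq 2$; if $\Re\lambda_k > 0$ the block grows exponentially.

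Assembling these observations yields both parts. For part~(i), $\|e^{B\tau}\|$ is bounded on $\tau \geq 0$ if and only if every exponent satisfies $\Re\lambda_k \leq 0$ and every exponent with $\Re\lambda_k = 0$ has a simple elementary divisor; by the squeeze of the first paragraph this is equivalent to uniform boundedness of $\|P(t)P^{-1}(s)\|$, hence to uniform stability. For part~(ii), $\|e^{B\tau}\| \to 0$ as $\tau \to \infty$ if and only if every exponent satisfies $\Re\lambda_k < 0$; setting $-\eta' = \max_k \Re\lambda_k < 0$ and choosing any $\eta \in (0,\eta')$, the gap $\eta' - \eta > 0$ absorbs each polynomial factor and gives $\|e^{B\tau}\| \leq K_0 e^{-\eta\tau}$ for some $K_0 > 0$. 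Multiplying by the uniform bounds on $X$ and $X^{-1}$ produces $\|P(t)P^{-1}(s)\| \leq K e^{-\eta(t-s)}$ with $K = K_0\,\sup_t\|X(t)\|\,\sup_s\|X^{-1}(s)\|$, which is exactly the claimed estimate and, through Definition~\ref{def:stability}, uniform asymptotic stability.

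The delicate point, and the main obstacle, is the boundary regime $\Re\lambda_k = 0$: one must show that a \emph{nontrivial} Jordan block there genuinely produces unbounded growth, so that the simple-elementary-divisor condition is necessary and not merely sufficient, and one must identify ``simple elementary divisor'' with a $1\times 1$ Jordan block, i.e.\ with semisimplicity of that exponent. The remaining work is bookkeeping: verifying that the two-sided bounds on the periodic factor $X(t)$ are independent of the initial time $t_0$ (this is where \emph{uniform} stability, rather than plain Lyapunov stability, is earned from $T$-periodicity), and confirming that the absorption of the polynomial factors into the exponential is uniform across the finitely many Jordan blocks.
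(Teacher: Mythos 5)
The paper itself contains no proof of this statement: it is quoted, with notation adapted, from the cited reference (Hale, Theorem~III.7.2) and is used as a black box, so there is no in-paper argument to compare yours against. What you have written is, in substance, the classical proof from that reference, and it is correct: the factorization $P(t)P^{-1}(s)=X(t)\,e^{B(t-s)}\,X^{-1}(s)$, the uniform two-sided bounds on the periodic factor coming from continuity, invertibility and $T$-periodicity of $X$, the identification of the characteristic exponents with the eigenvalues of $B$ (their real parts are well defined even though the exponents themselves are only determined modulo $2\pi i/T$), and the Jordan-block trichotomy (exponential decay for negative real part, boundedness exactly when the block is $1\times 1$ for zero real part, growth otherwise) together give both parts; your choice of $\eta\in(0,\eta')$ with $\eta'=-\max_k \Re\lambda_k$ correctly absorbs the finitely many polynomial factors. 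The one wrinkle worth flagging is inherited from the paper's statements rather than introduced by you: for the row-vector system $P'(t)=-P(t)\mathcal{L}(t)$ the solution issued from $\mathbf{x}_0$ at time $s$ is $\mathbf{x}(t)=\mathbf{x}_0\,P^{-1}(s)P(t)$, so the operator controlling solutions is $P^{-1}(s)P(t)$, and the Floquet factorization consistent with that convention places the exponential on the left, $P(t)=e^{Bt}Y(t)$ with $Y$ periodic. Your computation follows the column-vector conventions in which the paper (and Hale) actually writes the formulas; with either convention applied consistently the same squeeze argument goes through, so the conclusion stands.
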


Let us stress that the characteristic exponents are defined only
after the solutions of~\eqref{eq:floquet_matrix} are computed. In
general, there is no straightforward relation between the
characteristic exponents and the eigenvalues of $\mathcal{L}(t)$.

Fortunately, a much more general result is available. Indeed, we can
prove the uniformly asymptotical stability for our linear
differential equation also if $\mathcal{L}(t)$ is not $T$-periodic;
see W. A. Coppel \cite[Chapter 1]{coppel1978dichotomies}, provided
that the exponent scalar function $\alpha(t)$ is regular enough.
\begin{theorem}[\cite{coppel1978dichotomies}]
    If there exist $K > 0$, $\eta> 0$ such that $\|P(t)P^{-1}(s)\| \leq
    K \exp(-\eta(t-s))$, $P$ fundamental matrix solution of
    \eqref{eq:nonautonomousfractionallaplacianproblem} for $t \geq s$,
    the IVP~\eqref{eq:nonautonomousfractionallaplacianproblem} is
    uniformly asymptotically stable. \label{def:UAstability}
\end{theorem}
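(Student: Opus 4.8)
The plan is to reduce the statement to the explicit representation of solutions of a linear homogeneous system through the fundamental matrix $P$, and then to read off each of the two stability requirements of Definition~\ref{def:stability} directly from the hypothesized exponential estimate. Since~\eqref{eq:nonautonomousfractionallaplacianproblem} is linear and homogeneous, the zero solution is an equilibrium and every solution is transported by the state-transition operator built from $P$. Writing $\mathbf{p}(t,t_0,\mathbf{p}_0)$ for the solution issuing from $\mathbf{p}_0$ at time $t_0$, one has $\mathbf{p}(t,t_0,\mathbf{p}_0)=\mathbf{p}_0\,P^{-1}(t_0)P(t)$, whose norm is controlled precisely by the quantity appearing in the hypothesis. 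Taking norms and invoking the assumption then gives
\[
\|\mathbf{p}(t,t_0,\mathbf{p}_0)\| \le K\,e^{-\eta(t-t_0)}\,\|\mathbf{p}_0\|, \qquad t \ge t_0 .
\]
Everything else follows from this single inequality.

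For uniform stability, given $\varepsilon>0$ I would set $\delta=\varepsilon/K$, which is manifestly independent of $t_0$. Because $e^{-\eta(t-t_0)}\le 1$ for $t\ge t_0$, any $\|\mathbf{p}_0\|<\delta$ forces $\|\mathbf{p}(t,t_0,\mathbf{p}_0)\|\le K\|\mathbf{p}_0\|<\varepsilon$ for all $t\ge t_0$, which is exactly the uniform stability condition of Definition~\ref{def:stability}.

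For uniform asymptotic stability I would fix any attraction radius $b>0$ (for instance $b=1$) and, for a prescribed tolerance $\tilde\eta>0$, choose $T(\tilde\eta)=\max\{0,\tfrac{1}{\eta}\ln(Kb/\tilde\eta)\}$. Then $\|\mathbf{p}_0\|<b$ yields $\|\mathbf{p}(t,t_0,\mathbf{p}_0)\|\le Kb\,e^{-\eta(t-t_0)}<\tilde\eta$ as soon as $t\ge t_0+T(\tilde\eta)$. Since $T$ depends only on $\tilde\eta$ and not on $t_0$, the attractivity is uniform, and combined with the uniform stability just established this is precisely uniform asymptotic stability in the sense of Definition~\ref{def:stability}.

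The analysis is routine once the transition-operator bound is in hand; I expect the only care to be needed in two places of bookkeeping. First, one must fix the row-vector convention of~\eqref{eq:nonautonomousfractionallaplacianproblem} so that the transition operator $P^{-1}(t_0)P(t)$ is written in the correct order and its norm is identified with the stated $\|P(t)P^{-1}(s)\|$ (a transpose/ordering matter, not an analytic one). Second, the decay rate $\eta$ in the hypothesis collides with the tolerance symbol $\eta$ in Definition~\ref{def:stability}, so I have deliberately renamed the latter $\tilde\eta$. No genuine obstacle arises, because the exponential dichotomy estimate supplied in the hypothesis already encodes all the decay that uniform asymptotic stability requires.
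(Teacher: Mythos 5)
You should first know that the paper contains no proof of this statement at all: it is imported verbatim from Coppel's monograph \cite{coppel1978dichotomies} and used as a black box in the stability theorem that follows it. So there is nothing internal to compare against; what you have written is the standard textbook argument for the cited result, and it is correct. The representation $\mathbf{p}(t,t_0,\mathbf{p}_0)=\mathbf{p}_0\,P^{-1}(t_0)P(t)$ is the right one for the row-vector convention of \eqref{eq:nonautonomousfractionallaplacianproblem}; the choice $\delta=\varepsilon/K$ (independent of $t_0$) delivers uniform stability, and $b=1$ together with $T(\tilde\eta)=\max\{0,\eta^{-1}\ln(Kb/\tilde\eta)\}$ delivers uniform attractivity, the strict inequality coming from $\|\mathbf{p}_0\|<b$; this matches both clauses of Definition~\ref{def:stability}, and renaming the tolerance $\tilde\eta$ was sensible.

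The one place where you are too quick is the closing claim that the mismatch between the hypothesized quantity $\|P(t)P^{-1}(s)\|$ and the operator $P^{-1}(t_0)P(t)$ that actually propagates row vectors is ``a transpose/ordering matter, not an analytic one.'' In general it is analytic: $P^{-1}(s)P(t)=P(s)^{-1}\bigl[P(t)P^{-1}(s)\bigr]P(s)$ is a similarity by $P(s)$, whose condition number need not be bounded in $s$ (for a decaying system $P(s)\to 0$, so $\operatorname{cond}(P(s))$ can grow without bound), and exponential decay of one ordering does not by itself imply exponential decay of the other with uniform constants for an arbitrary fundamental matrix. Two readings make your identification legitimate, and you should state one of them explicitly: (i) take the hypothesis in Coppel's original column-vector convention, to which the row system is carried by transposition — since the spectral norm is transpose-invariant, $\|P^{-1}(s)P(t)\|_2=\|P(t)^{T}(P(s)^{T})^{-1}\|_2$ is exactly the transition-matrix norm of the transposed column system $\mathbf{x}'=-\mathcal{L}(t)^{T}\mathbf{x}$; or (ii) observe that in the only use the paper makes of this theorem one has $P(t)=\exp(-\int_0^{t}L^{\alpha(\tau)}\,d\tau)$, so $P(t)$ and $P^{-1}(s)$ are functions of the same matrix $L$ and commute, making the two orderings equal. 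One sentence to this effect closes the only gap; everything else in your argument is sound.
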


\begin{theorem}
    Given a (strongly) connected graph $G=(V,E)$, let $L$ be its
    combinatorial (out-degree) Laplacian matrix
    in~\eqref{eq:combinatorial_laplacian} and $\alpha : \mathbb{R}^+
    \rightarrow (0,1]$ a continuous function. Then, the solution
    $\mathbf{p}(t)$ of
    problem~\eqref{eq:nonautonomousfractionallaplacianproblem} is
    uniformly asymptotically stable.
\end{theorem}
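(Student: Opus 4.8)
The plan is to produce an explicit fundamental matrix solution of~\eqref{eq:nonautonomousfractionallaplacianproblem}, estimate the transition operator $P(t)P^{-1}(s)$, and then invoke Theorem~\ref{def:UAstability}. By Theorem~\ref{thm:solution_of_nonautonomous_heat} together with the commutativity established in Proposition~\ref{pro:fractional_Laplacian_commutativity}, the matrix $P(t) = \exp(-\mathfrak{L}(t))$, with $\mathfrak{L}(t) = \int_0^t L^{\alpha(\tau)}\,d\tau$, is a fundamental matrix solution, and since all the $L^{\alpha(\tau)}$ are functions of the single constant matrix $L$ they commute with one another and with their integrals. Consequently $P(t)P^{-1}(s) = \exp\big(-\int_s^t L^{\alpha(\tau)}\,d\tau\big)$ for $t \geq s$, so the whole analysis reduces to bounding the norm of a single matrix exponential whose exponent is built from $L$.

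First I would treat the undirected case. Using the decomposition~\eqref{eq:eigenvaluedecompofL}, $\int_s^t L^{\alpha(\tau)}\,d\tau = X\operatorname{diag}\big(\int_s^t \lambda_i^{\alpha(\tau)}\,d\tau\big)X^T$, so the eigenvalues of the transition operator are $\exp(-\int_s^t \lambda_i^{\alpha(\tau)}\,d\tau)$. The obstruction is the zero eigenvalue $\lambda_1 = 0$: it corresponds to the conserved direction $\mathbf{1}$ (recall $L^{\alpha(t)}\mathbf{1} = \mathbf{0}$ and $\mathbf{p}(t)\mathbf{1} \equiv 1$) along which there is no decay, so the bound of Theorem~\ref{def:UAstability} cannot hold on all of $\mathbb{C}^n$. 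I therefore restrict the dynamics to the invariant, mass-preserving subspace orthogonal to $\mathbf{1}$, on which the steady state $\mathbf{1}^T/n$ is reached. For a connected $G$ the algebraic connectivity satisfies $\lambda_2 > 0$, and since $\alpha \mapsto \lambda_2^{\alpha}$ is continuous and strictly positive on the compact interval $[0,1]$ it attains a minimum $\eta := \min_{\alpha\in[0,1]}\lambda_2^{\alpha} > 0$. Hence for $i \geq 2$ one has $\int_s^t \lambda_i^{\alpha(\tau)}\,d\tau \geq \eta(t-s)$, which gives $\|P(t)P^{-1}(s)\| \leq e^{-\eta(t-s)}$ on that subspace, i.e.\ the hypothesis of Theorem~\ref{def:UAstability} with $K = 1$.

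For a strongly connected directed graph I would argue through the Jordan form~\eqref{eq:jordanpart1}--\eqref{eq:jordanpart2}, writing $\int_s^t L^{\alpha(\tau)}\,d\tau = Z\big(\int_s^t J^{\alpha(\tau)}\,d\tau\big)Z^{-1}$; as noted in Remark~\ref{rmk:matrix_function_are_simple} the transform $Z$ is independent of $t$ and each block integrates to an upper triangular Toeplitz matrix whose diagonal is $\int_s^t \lambda_k^{\alpha(\tau)}\,d\tau$. Since $L$ (the out-degree Laplacian) is an M-matrix with a simple zero eigenvalue and all remaining eigenvalues in the open right half-plane, I would write $\lambda_k = r_k e^{i\theta_k}$ with $|\theta_k| < \pi/2$, so that $\operatorname{Re}(\lambda_k^{\alpha}) = r_k^{\alpha}\cos(\alpha\theta_k) > 0$ for all $\alpha\in(0,1]$; compactness again yields a uniform $\eta > 0$ with $\operatorname{Re}\int_s^t \lambda_k^{\alpha(\tau)}\,d\tau \geq \eta(t-s)$ for the nonzero eigenvalues. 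The nilpotent parts of the blocks contribute only polynomial factors in $(t-s)$, which are absorbed into a constant $K$ after slightly decreasing $\eta$, so that $\|P(t)P^{-1}(s)\| \leq K e^{-\eta(t-s)}$ on the complement of the simple null direction.

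I expect the main obstacle to be precisely this null-direction issue: because probability is conserved the zero solution is only uniformly stable on the full space, and uniform asymptotic stability is genuine only after one passes to the invariant mass-preserving subspace where the steady state attracts. Making the uniform-in-$t$ lower bound $\eta$ rigorous relies on the compactness of the exponent range $[0,1]$ together with the continuity and positivity of $\alpha\mapsto\lambda^{\alpha}$, while in the directed case one must additionally verify that the Jordan polynomial growth does not spoil the exponential dichotomy estimate. Once the bound $\|P(t)P^{-1}(s)\|\leq K e^{-\eta(t-s)}$ is in hand, Theorem~\ref{def:UAstability} closes the argument.
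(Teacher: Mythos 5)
Your proposal follows the same skeleton as the paper's proof: take the fundamental solution $P(t)=\exp\bigl(-\int_0^t L^{\alpha(\tau)}\,d\tau\bigr)$ from Proposition~\ref{pro:fractional_Laplacian_commutativity}, reduce the transition operator to $P(t)P^{-1}(s)=\exp\bigl(-\int_s^t L^{\alpha(\tau)}\,d\tau\bigr)$ via the decomposition~\eqref{eq:eigenvaluedecompofL} (resp.\ the Jordan form~\eqref{eq:jordanpart1}--\eqref{eq:jordanpart2} for directed graphs), obtain an exponential dichotomy bound, and close with Coppel's criterion, Theorem~\ref{def:UAstability}. However, your version is more careful at exactly the two points where the paper's displayed argument is defective, so the differences are worth recording. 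First, the paper bounds $\bigl\|\exp\bigl(\int_t^s\Lambda^{\alpha(\tau)}\,d\tau\bigr)\bigr\|$ by $\exp\bigl(\bigl\|\int_t^s\Lambda^{\alpha(\tau)}\,d\tau\bigr\|\bigr)$ and then claims this is at most $\exp(-\beta(t-s))$: that chain cannot hold as written, since the exponential of a norm is never smaller than $1$; moreover the paper's decay constant $\max_{i,\tau}\lambda_i(L)^{\alpha(\tau)}$ is the wrong extremum --- the decay of a diagonal matrix of exponentials is governed by the \emph{smallest} nonzero exponent, which is precisely the quantity $\eta=\min_{\alpha\in[0,1]}\lambda_2^{\alpha}>0$ you extract from connectivity ($\lambda_2>0$) plus compactness of $[0,1]$. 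Second, you confront the conserved direction head-on: since $L^{\alpha(t)}\mathbf{1}=\mathbf{0}$, the transition operator has eigenvalue exactly $1$ along $\mathbf{1}$, so the dichotomy estimate is false on all of $\mathbb{C}^n$ and holds only on the invariant subspace $\{\mathbf{v}:\mathbf{v}\mathbf{1}=0\}$ in which differences of probability solutions live; the paper silently ignores this direction, and your restriction is what makes ``uniform asymptotic stability of the solution $\mathbf{p}(t)$'' (against mass-preserving perturbations) a rigorous claim. Your directed case is likewise a completion rather than a departure: the paper assumes diagonalizability and only remarks that Jordan blocks ``can be adapted,'' whereas you verify positivity of $\operatorname{Re}(\lambda_k^{\alpha})=r_k^{\alpha}\cos(\alpha\theta_k)$ for $|\theta_k|<\pi/2$ uniformly in $\alpha$, and absorb the polynomial growth of the nilpotent parts into the constant $K$ at the cost of slightly decreasing $\eta$, which is the standard and correct way to preserve the estimate required by Theorem~\ref{def:UAstability}.
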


\begin{proof}
    Let $P(t)$ be the matrix solution
    of~\eqref{eq:nonautonomousfractionallaplacianproblem}, i.e., $P(t) =
    \exp\left(-\int_{0}^{t}L^{\alpha(\tau)}\,d\,\tau\right)$, we will
    prove that there exists $K > 0$, and $\eta > 0$ such that
    \begin{equation}\label{eq:stability_condition}
    \| P(t)P^{-1}(s) \| \leq K \exp(-\eta(t - s)), \qquad t \geq s,
    \end{equation}
    so that we can apply Theorem~\ref{thm:stability_condition}. We start
    from the case in which $G$ is an undirected graph, thus, by using
    the Euclidean norm, by direct computation, we find
    \begin{align*}
    \| P(t)P^{-1}(s) \|_2 = & \left\| \exp\left(-\int_{0}^{t}L^{\alpha(\tau)}\,d\,\tau\right) \exp\left(\int_{0}^{s}L^{\alpha(\tau)}\,d\,\tau\right)  \right\| \\
    = & \left\| X \exp\left(-\int_{0}^{t}\Lambda^{\alpha(\tau)}\,d\,\tau\right) \exp\left(\int_{0}^{s}\Lambda^{\alpha(\tau)}\,d\,\tau\right) X^{T} \right\| \\
    = & \left\|\exp\left( \int_{t}^{s}\Lambda^{\alpha(\tau)}\,d\,\tau\right) \right\| \\
    \leq & \exp\left(\left\|\int_{t}^{s}\Lambda^{\alpha(\tau)}\,d\,\tau\right\|\right) \leq
    \exp\left( (s-t) \max_{\tau \in \mathbb{R}} \| \Lambda^{\alpha(\tau)} \| \right) \\
    \leq & \exp(- \beta (t-s)),
    \end{align*}
    from which we find~\eqref{eq:stability_condition} for $K=1$ and
    \begin{equation*}
    \eta = \beta = \max_{i,\tau}\lambda_i(L)^{\alpha(\tau)},
    \end{equation*}
    where
    $\lambda_i(L)$ is the $i$-th eigenvalue of the Laplacian. For a
    directed graph, we suppose that the Laplacian is diagonalizable and
    we get the same result but with
    \begin{equation*}
    K= \| Z \|_2 \| Z^{-1} \|_2 = \operatorname{cond}_2(Z),
    \end{equation*}
    $\operatorname{cond}_2$ the 2-norm condition number of the matrix $Z$ diagonalizing $L$.

    If $L$ cannot be diagonalized, a similar approach can be adapted by
    using a Jordan decomposition of the Laplacian matrix.
\end{proof}

    The results on stability analysis we have obtained here could be applied to
        neural networks, by leveraging , e.g., the results in~\cite{agarwal}, and the references
        therein. Other possible applications of these techniques are in~\cite{Agarwal2021,Rajchakit2021,Boonsatit2021,Rajchakit2021b}.

\section{Integrating the non-autonomous systems}\label{sec:integrating_the_non_autonomous_system}

Let us now consider the numerical integration
of~\eqref{eq:nonautonomousfractionallaplacianproblem} to see the
evolution of the probability distributions on the graphs.

Here we use the \textsc{MATLAB}'s ode packages \texttt{ode45} and
\texttt{ode15s}. Both are methods with local error estimators that
can control and adapt the stepsize to reach the prescribed accuracy
(here we request a relative tolerance \texttt{reltol}$=10^{-6}$).
The first method is a fourth-order nonstiff integrator based on
(explicit) Runge-Kutta-Fehlberg formulas while \texttt{ode15s} is
based on the numerical differentiation formulas (NDFs, a
generalization of BDFs; see \cite{lambert1991numerical} for details
on BFDs) of orders from $1$ to $5$. The latter is based on implicit
schemes and is able to manage \emph{stiff} problems. For more
details on formulas for numerical integration of ODEs, in particular
of their consistence, convergence and stability, see, e.g.,
\cite{lambert1991numerical}. In particular, for the definitions of
stiff problem and stiffness see~\cite[Chapter
6]{lambert1991numerical}.

\subsection{Operations with $L^\alpha$}

We considered the use of implicit time-step integration to avoid
possible severe stepsize restrictions to satisfy stability
requirements of the underlying formulas that we experienced in some
tests even for not so long final times. Implicit time-step
integrators for a problem of the form~\eqref{eq:generic_ode_problem}
require the solution of a sequence of linear systems of the form
\begin{equation}\label{eq:thingtosolve}
\left( \alpha_m I + {\delta}_{m} \beta_{m} \mathcal{A}(t_m) \right)
\mathbf{y}^{(m)} = {\delta}_{m} \mathbf{v}^{(m-1)}, \qquad
m=0,1,\dots,n_t,
\end{equation}
where ${\delta}_{m}$ is the $m$-th time step, the coefficients
$\{\alpha_m,\beta_m\}_{m=0}^{\ell}$ are selected depending on the
particular formula used, $\mathbf{v}^{(m-1)}$ is a suitable
combination of the vectors containing previous time steps and the
$\{\alpha_m,\beta_m\}$ coefficients. See, e.g.
\cite[Section~4.1]{MR3989621} for the use of linear multistep
formulas in a similar context and \cite{lambert1991numerical} for a
general discussion.

In the case we are treating here, we use the above mentioned
\textsc{MATLAB} time-step integrators package. In principle, they
can be invoked by employing only the dynamics of the associated
differential problems. This means that we need a routine that can
evaluate $\mathbf{v}\mathcal{A}$ for the $\mathbf{v}$ generated by
the underlying integrator to handle
\begin{equation}\label{eq:f1}
f(x) = -x^{\alpha(t)},
\end{equation}
or
\begin{equation}\label{eq:f2}
f(x) = -i x^{\alpha(t)},
\end{equation}
for computing $\mathbf{v}f(L)$. Then, to formulate and solve the
linear system~\eqref{eq:thingtosolve}, the routine assembles the
whole matrix at each new time-step and utilizes a direct solver. The
building phase for the sequence of linear
systems~\eqref{eq:thingtosolve}, if no further information are given
to the integrator, is completed by performing several matrix-vector
products with $L^{\alpha(t)}$. To reduce the computing time, we
provide to the integrator a routine to build directly the matrix
$L^{\alpha(t)}$ for every $t$.

A further reduction in the timings could result by implementing a
code for computing directly the matrix function-vector product with
\[
F(x;t) = \left( \alpha_m + {\delta}_{m} \beta_{m} x^{\alpha(t)} \right)^{-1}, \quad m=0,1,\dots,n_t,
\]
for $\delta_{m} \mathbf{v}^{(m-1)} F(\mathcal{A};t_m) $. This
procedure can be based on the techniques for the computation of
matrix function times a vector product with the
functions~\eqref{eq:f1}, and~\eqref{eq:f2}. For symmetric positive
definite matrices there exist several efficient approaches, either
based on various type of quadrature
formulas~\cite{MR3597163,MR3987166,MR4046619,MR4099859,MR3504596},
or on Krylov methods~\cite{MR3989621,Massei2021,MR3854059}. The case
we want to deal with here needs an extra care because of the
presence of the zero eigenvalue in $L$ for which we refer to the
strategies introduced in~\cite{benzi2020rational}. Having selected
the procedure for computing the $\alpha$th power, then the extension
to the computation of the $F(x,t)$ could be addressed with the
techniques discussed in~\cite[Section~4.1]{MR3989621}.

\subsection{Numerical examples} We consider numerical examples on some real-world
complex networks from~\cite{nr-aaai15}, and choices for the values
of the function $\alpha(t)$ from~\cite{2102.09932}. All the
experiments run on \textsc{MATLAB} 9.6.0.1072779 (R2019a) installed
on a Linux machine with an Intel\textregistered\ Core\texttrademark\
i7-8750H CPU @ 2.20GHz processor, and 16 Gb of RAM.

\begin{example}
    We consider as the first example of this section the \emph{Zachary's
        Karate club network}~\cite{Beerenwinkel8271,nr-aaai15}. This is a
    small (undirected) social network of a university karate club with
    $n=34$ nodes. We simulate both the dynamics
    from~\eqref{eq:nonautonomous_extension} for some choice of
    $\alpha(t)$ from~\cite{2102.09932}. We start the simulation from a
    random vector sampled from a uniform distribution across the nodes
    of the network.
    \begin{figure}[htbp]
        \centering
        \includegraphics[width=0.8\columnwidth]{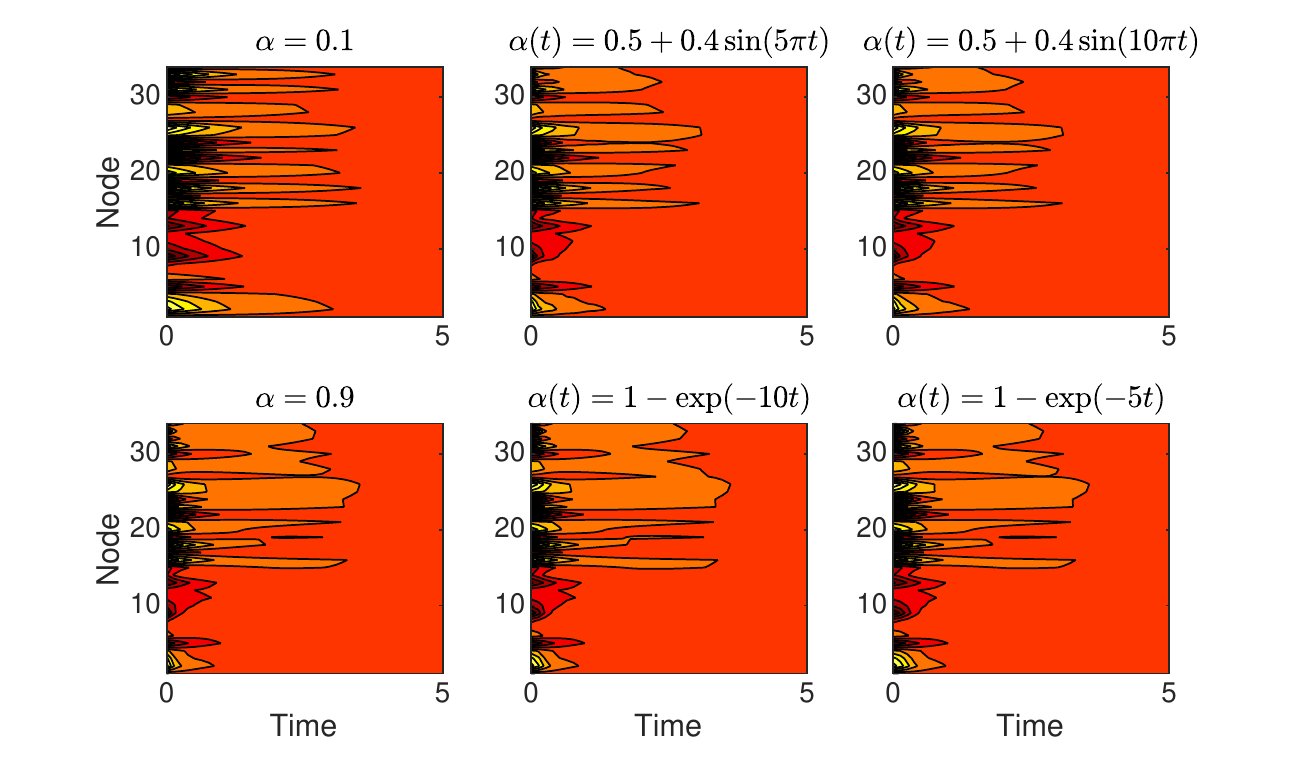}
        \caption{Zachary's karate club network~\cite{nr-aaai15,Beerenwinkel8271}. This is a small social network of a university karate club with $n=34$ nodes. Evolution of the probability $\mathbf{p}_i(t)$ for $i=1,\ldots,34$, $t \in [0,10]$ for different choices of $\alpha(t)$ and the Heat equation dynamics from~\eqref{eq:nonautonomous_extension}.\label{fig:karate1}}
    \end{figure}
    In Figure~\ref{fig:karate1} we observe that the use of the different
    $\alpha(t)$ alters the evolution of the probability. Specifically,
    for each time step, we report the value of the probability at the
    given node. In every case, after a long time, all the solutions
    reach the steady-state, represented by the uniform color, and we
    observe different transient behavior.

    We also observe how this behavior is maintained even in the case of
    \emph{only continuous} $\alpha(t)$ functions. To this end, we
    consider the periodic sawtooth function in Figure~\ref{fig:sawtooth}
    alternating between the values $(0.05,0.75]$.

    \begin{figure}[htbp]
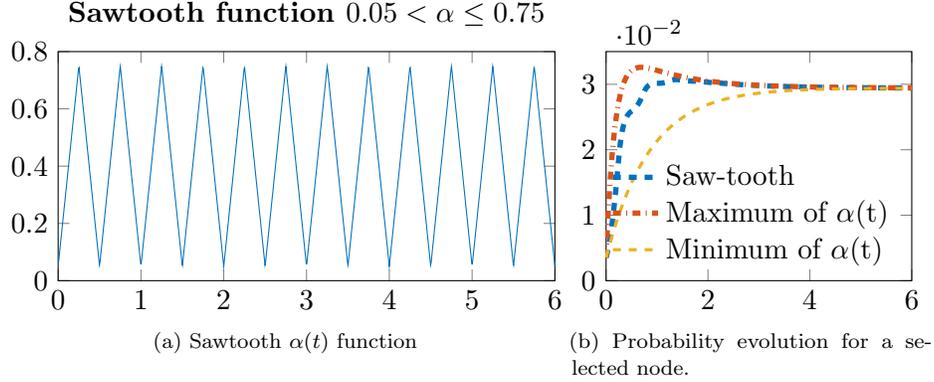

        \centering \subfloat[Sawtooth $\alpha(t)$
        function\label{fig:sawtooth}]{\input{sawtoothfun.tikz}}
        \subfloat[Probability evolution for a selected
        node.\label{fig:sawtooth-dynamic}]{\input{sawtoothdynamic.tikz}}

        \caption{Zachary's Karate club
            network~\cite{nr-aaai15,Beerenwinkel8271}. Example of the
            trajectories obtained by employing a function $\alpha(t)$ that is
            only continuous. Comparison with the trajectories obtained with the
            constant fractional Laplacian with values the maximum and minimum of
            the $\alpha(t)$ function.}
    \end{figure}
    In this case, the evolution of the probability distribution behaves
    again as intended. If we look at the evolution for a given node in
    Figure~\ref{fig:sawtooth-dynamic}, we observe that the probability
    ``oscillates'' between the behavior
    given by the two fixed value of $\alpha(t)$, the two extremes of the
    sawtooth function.

    A more complex behavior is highlighted by Schr\"odinger model, see
    Figure~\ref{fig:karate2}, in which we do not reach a steady-state,
    as in the case of a fixed $\alpha \in (0,1]$.
    \begin{figure}[htbp]
        \centering
        \includegraphics[width=0.8\columnwidth]{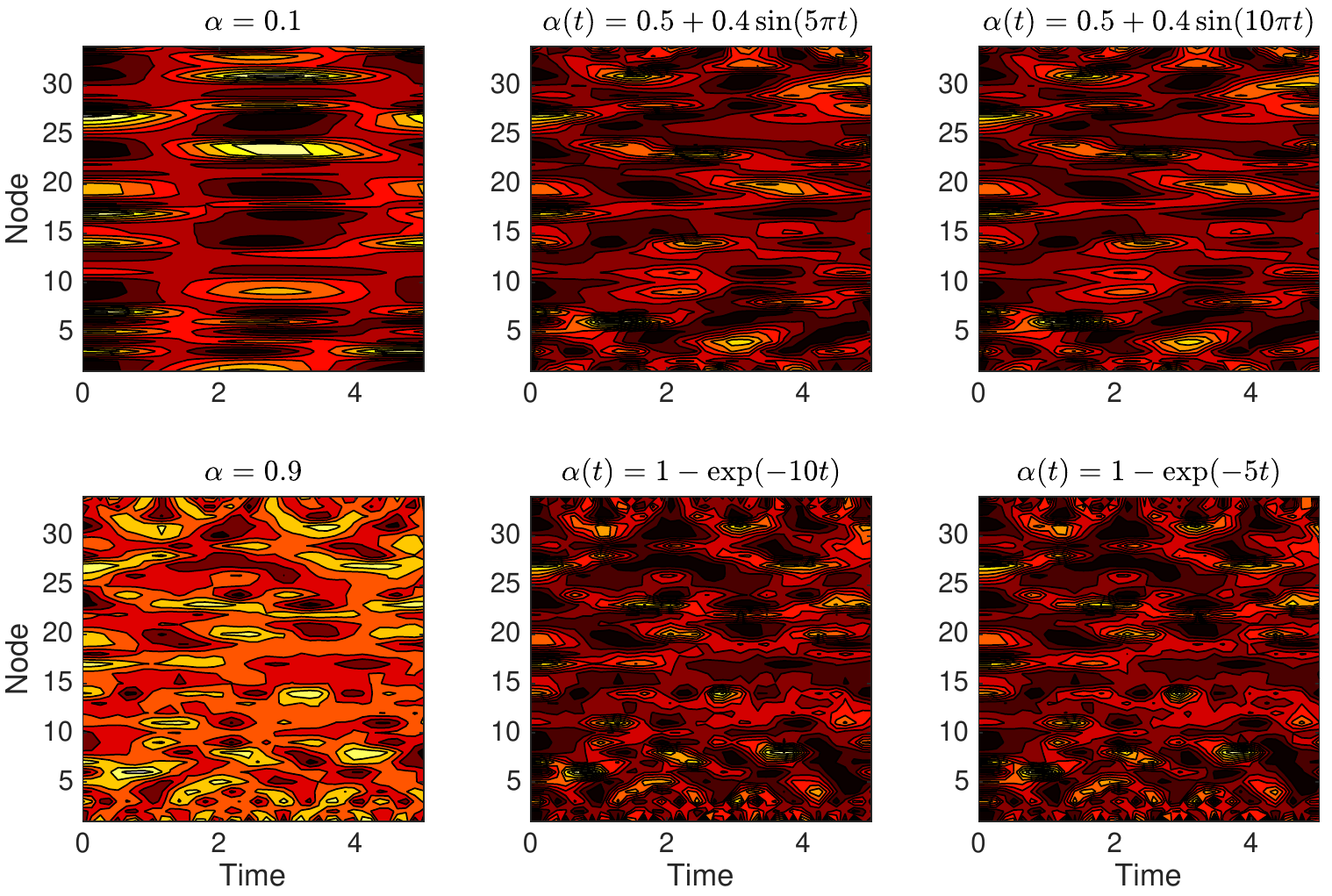}
        \caption{Zachary's Karate club, Schr\"odinger model~\cite{nr-aaai15,Beerenwinkel8271}.
            A small social network of a university Karate club with $n=34$ nodes.\label{fig:karate2}}
    \end{figure}
    For all tests, we have a different behavior in the intermediate
    times, showing the effect of the new exploration strategy.
\end{example}

\begin{example}
    To look more closely at the convergence towards the steady-state, we
    consider a larger graph based on US airlines in 1997, where the
    graph is undirected with $332$ nodes. The (real and nonnegative)
    eigenvalue distribution of the underlying Laplacian matrix is
    depicted in Figure \ref{fig:USAir97eigs}.
    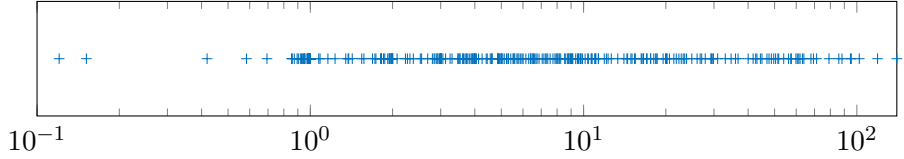
\begin{figure}[htbp]
        \centering
%
%
\definecolor{mycolor1}{rgb}{0.00000,0.44700,0.74100}%
\begin{tikzpicture}

\begin{axis}[%
width=4.5in,
height=0.6in,
at={(1.096in,0.441in)},
scale only axis,
xmode=log,
xmin=0.1,
xmax=140.029428972542,
xminorticks=true,
ymin=-0.20,
ymax=0.20,
ymajorticks=false,
axis background/.style={fill=white},
legend style={legend cell align=left, align=left, draw=white!15!black}
]
\addplot [color=mycolor1, draw=none, mark=+, mark options={solid, mycolor1}]
  table[row sep=crcr]{%
0.0	0\\
0.120484202134577	0\\
0.15156297294081	0\\
0.419139817481331	0\\
0.584111363189358	0\\
0.69412826707162	0\\
0.853692995042642	0\\
0.859167596330982	0\\
0.873802680268994	0\\
0.894836011965865	0\\
0.904242280454085	0\\
0.91929427559303	0\\
0.9282210486742	0\\
0.93493284333123	0\\
0.945321530020794	0\\
0.946777368541647	0\\
0.951057111482639	0\\
0.954084329247929	0\\
0.964500794911579	0\\
0.975660616452569	0\\
0.977171877795232	0\\
0.978265494475648	0\\
0.979033842303663	0\\
0.97997012480046	0\\
0.98074218844082	0\\
0.981879116669073	0\\
0.986773703189752	0\\
0.999999999999991	0\\
0.999999999999992	0\\
0.999999999999994	0\\
0.999999999999994	0\\
0.999999999999994	0\\
0.999999999999995	0\\
0.999999999999995	0\\
0.999999999999995	0\\
0.999999999999996	0\\
0.999999999999996	0\\
0.999999999999996	0\\
0.999999999999997	0\\
0.999999999999997	0\\
0.999999999999997	0\\
0.999999999999997	0\\
0.999999999999998	0\\
0.999999999999999	0\\
0.999999999999999	0\\
1	0\\
1	0\\
1	0\\
1	0\\
1	0\\
1	0\\
1	0\\
1	0\\
1	0\\
1	0\\
1	0\\
1	0\\
1	0\\
1	0\\
1.00000000000001	0\\
1.00000000000001	0\\
1.00000000000001	0\\
1.00000000000001	0\\
1.00000000000001	0\\
1.00000000000001	0\\
1.00000000000002	0\\
1.00000000000002	0\\
1.07360560517169	0\\
1.08552307375568	0\\
1.1597640988245	0\\
1.23180234064629	0\\
1.34779982230458	0\\
1.37502210667577	0\\
1.38196601125011	0\\
1.42414802411377	0\\
1.54257522977715	0\\
1.5708711743253	0\\
1.6893282864596	0\\
1.72342403157658	0\\
1.73819700646828	0\\
1.80867988409741	0\\
1.81272674551409	0\\
1.81956456349598	0\\
1.82188753811355	0\\
1.85335944491435	0\\
1.87310506212169	0\\
1.92002093231431	0\\
1.942989093091	0\\
1.94795600113477	0\\
1.95216887308301	0\\
1.9592655072385	0\\
1.97543347163252	0\\
1.98160441547822	0\\
1.98185023513326	0\\
1.98340119231314	0\\
1.99999999999999	0\\
1.99999999999999	0\\
2	0\\
2	0\\
2	0\\
2.07801650341188	0\\
2.22930541952458	0\\
2.26682335255076	0\\
2.31491554778885	0\\
2.38196601125012	0\\
2.52635208458427	0\\
2.55448824806995	0\\
2.68871555202958	0\\
2.79770297802526	0\\
2.83643868068729	0\\
2.85859338626838	0\\
2.90262994179295	0\\
2.91025933339313	0\\
2.9336841822966	0\\
2.96694647487702	0\\
2.97826526566365	0\\
2.98861034523116	0\\
2.99999999999999	0\\
3	0\\
3	0\\
3	0\\
3	0\\
3	0\\
3.00000000000001	0\\
3.00000000000001	0\\
3.00000000000001	0\\
3.00000000000004	0\\
3.01365430849738	0\\
3.05583238366984	0\\
3.12633475537831	0\\
3.24670496098047	0\\
3.30147718164208	0\\
3.46217364362168	0\\
3.49286389220737	0\\
3.50780284568035	0\\
3.53602259304589	0\\
3.61274669570895	0\\
3.61803398874992	0\\
3.63853496701963	0\\
3.68274404365948	0\\
3.71760466056692	0\\
3.73157622616858	0\\
3.77075548654039	0\\
3.81957134932161	0\\
3.87473001352956	0\\
3.90249524219553	0\\
3.9251449012999	0\\
3.96589813014398	0\\
3.97776298604705	0\\
4	0\\
4	0\\
4.00000000000001	0\\
4.00000000000001	0\\
4.00000000000004	0\\
4.00000000000006	0\\
4.05318592977344	0\\
4.12768809095904	0\\
4.25295642878518	0\\
4.40226921873112	0\\
4.44045898943551	0\\
4.55328712877099	0\\
4.60227344908387	0\\
4.61803398874992	0\\
4.62881418252353	0\\
4.85114349602662	0\\
4.85333802552777	0\\
4.86857147742434	0\\
4.88823338758752	0\\
4.89868715048356	0\\
4.91016005449911	0\\
4.9456145545106	0\\
5	0\\
5.00000000000005	0\\
5.00000000000006	0\\
5.05980692660591	0\\
5.12525914950656	0\\
5.2223508645374	0\\
5.29652610219582	0\\
5.30028260989053	0\\
5.30734762310169	0\\
5.41594494281067	0\\
5.51691112038363	0\\
5.55718703199031	0\\
5.56312532877341	0\\
5.60356883698267	0\\
5.70302316065706	0\\
5.75057055966407	0\\
5.8017516886297	0\\
5.88604420297927	0\\
5.9548052915524	0\\
6.00876764297483	0\\
6.15026344637092	0\\
6.29938895498078	0\\
6.39108073500586	0\\
6.41010661922335	0\\
6.48290327749868	0\\
6.53269159955335	0\\
6.56114054750698	0\\
6.62165988583182	0\\
6.69444530378751	0\\
6.75948301798998	0\\
6.87080663627652	0\\
7.05792365706972	0\\
7.15975219773831	0\\
7.27415496676848	0\\
7.29555835111473	0\\
7.39436148177261	0\\
7.5431362137257	0\\
7.7266977345496	0\\
7.85874088772316	0\\
7.93501910217436	0\\
7.99915626946052	0\\
8.01004630498017	0\\
8.12636700819718	0\\
8.15081866095832	0\\
8.24642643102924	0\\
8.52741567458191	0\\
8.56915635488987	0\\
8.71656134220165	0\\
8.77281722717042	0\\
8.83226464739922	0\\
8.83715294701378	0\\
8.9431719484195	0\\
8.97734302338168	0\\
9.03153275446699	0\\
9.09823237008208	0\\
9.13091356128868	0\\
9.35799296910319	0\\
9.49752812734109	0\\
9.57933348641066	0\\
9.59986180363998	0\\
9.72067416557323	0\\
9.7421026274276	0\\
9.79392928497633	0\\
9.90444674704323	0\\
10.1324721781024	0\\
10.3595802038157	0\\
10.4098037540343	0\\
10.6573503273251	0\\
10.6975777549303	0\\
10.7453480491931	0\\
11	0\\
11.0078092098025	0\\
11.0492703742236	0\\
11.0922295534586	0\\
11.3192609701679	0\\
11.3443418593102	0\\
11.9039354113401	0\\
12.1201962544402	0\\
12.291762123009	0\\
12.6301734657273	0\\
13.3383857072919	0\\
14.0494191741887	0\\
14.4079191933587	0\\
14.4935351498176	0\\
14.899738519482	0\\
15.0427677005834	0\\
15.3616850726459	0\\
15.4661832437355	0\\
16.2461564453916	0\\
16.3387884848299	0\\
16.5137501933655	0\\
16.5742114236715	0\\
17.0083726681041	0\\
17.0789307277543	0\\
17.5851417797701	0\\
17.6001551003885	0\\
18.0646083813994	0\\
18.3590227192585	0\\
18.4355716781773	0\\
18.6000096513751	0\\
19.5483745509803	0\\
19.9599147581659	0\\
20.0488834795901	0\\
20.1078016481564	0\\
20.2209749234045	0\\
20.6191227404061	0\\
21.3980562365089	0\\
21.843497758899	0\\
22.331091363602	0\\
22.7709902818336	0\\
23.1536480498759	0\\
23.4414717234373	0\\
23.8024878647751	0\\
24.8947476915752	0\\
26.0297618744741	0\\
26.7070225830474	0\\
27.0554212105051	0\\
27.8783696743528	0\\
29.2341706222098	0\\
29.3679653723539	0\\
29.7077278845986	0\\
29.851259263103	0\\
30.9385625053187	0\\
33.0033644122489	0\\
34.6614158882201	0\\
35.9689618750604	0\\
36.750817781561	0\\
40.0252842526783	0\\
41.7244469930105	0\\
42.6416425217999	0\\
43.0946379912914	0\\
44.370676614451	0\\
44.9555619986496	0\\
46.8886455240145	0\\
47.7049467079097	0\\
48.6580428110092	0\\
49.6416882004474	0\\
50.4201875431969	0\\
51.3939258934995	0\\
51.7748080536442	0\\
53.9874084810482	0\\
56.2522639469111	0\\
57.3095131269532	0\\
57.6029679833749	0\\
59.8405176212357	0\\
60.5481876244068	0\\
61.2920840902558	0\\
61.9660367533319	0\\
63.2160061447993	0\\
63.9932296917352	0\\
67.9219779947898	0\\
69.3983984740485	0\\
71.237049702096	0\\
79.0892173568012	0\\
85.9107841565965	0\\
88.2288132867113	0\\
94.7606748840765	0\\
95.4122483058717	0\\
102.079786761902	0\\
119.043939236416	0\\
140.029428972542	0\\
};
\end{axis}
\end{tikzpicture}%
        \caption{Eigenvalue distribution of the Laplacian matrix of the graph of US Airlines in 1997.
            Source of the graph data: \cite{nr-aaai15}}
        \label{fig:USAir97eigs}
    \end{figure}
    In the figures \ref{fig:4USAir97ode15s}, \ref{fig:4USAir97odestiff}
    and \ref{fig:4USAir97odesin-stiff} we report the output and, in
    particular, the evolution of the probability distribution for runs
    with short final times. This shows some interesting performances of
    the functions $\alpha(t)$ considered. Moreover, the
    figures~\ref{fig:4USAir97odestiff}
    and~\ref{fig:4USAir97odesin-stiff} are useful to observe a moderate
    stiffness phenomena that can be present even in small graphs, and
    that is clearly highlighted by the difference in the number of time
    steps generated by the underlying stiff and nonstiff time-step
    integrators. In particular, for $\alpha(t)=1-\exp(-10t)$ in
    Figure~\ref{fig:4USAir97odestiff} we have that \texttt{ode45}
    employs $3349$ time steps, while \texttt{ode15s} needs just $90$. As
    expected, the difference is less pronounced in
    Figure~\ref{fig:4USAir97odesin-stiff}, where the periodic transition
    $\alpha(t)=0.5+0.4\sin(4\pi t)$ has been adopted instead:
    \texttt{ode45} employs 549 time steps while \texttt{ode15s} employs
    191.

    \begin{figure}[htbp]
        \centering
        \includegraphics[width=\columnwidth]{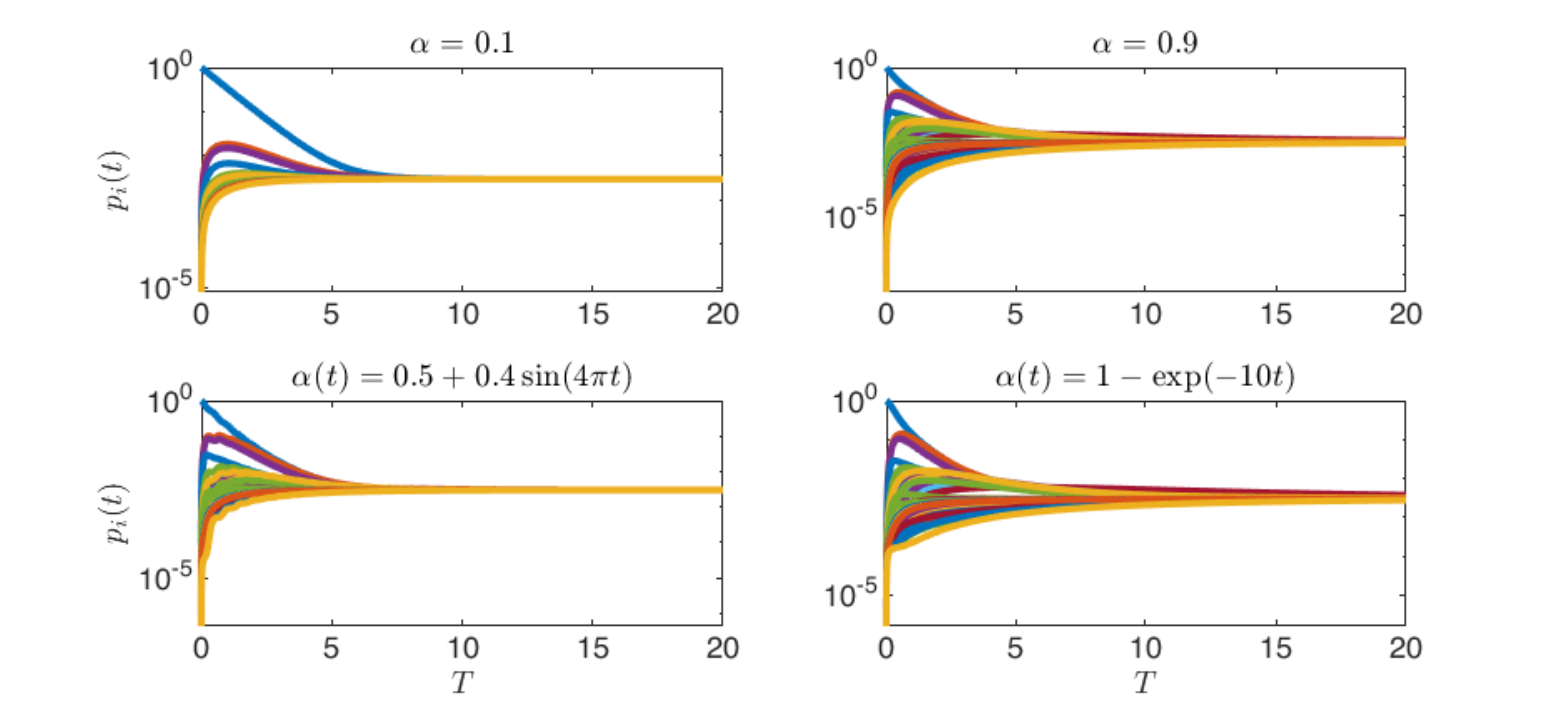}
        \caption{Comparison of short term results of US Airlines in 1997 integrated using ode15s.
            Source of the graph data: \cite{nr-aaai15}.}
        \label{fig:4USAir97ode15s}
    \end{figure}

    \begin{figure}[htbp]
        \centering
        \includegraphics[width=\columnwidth]{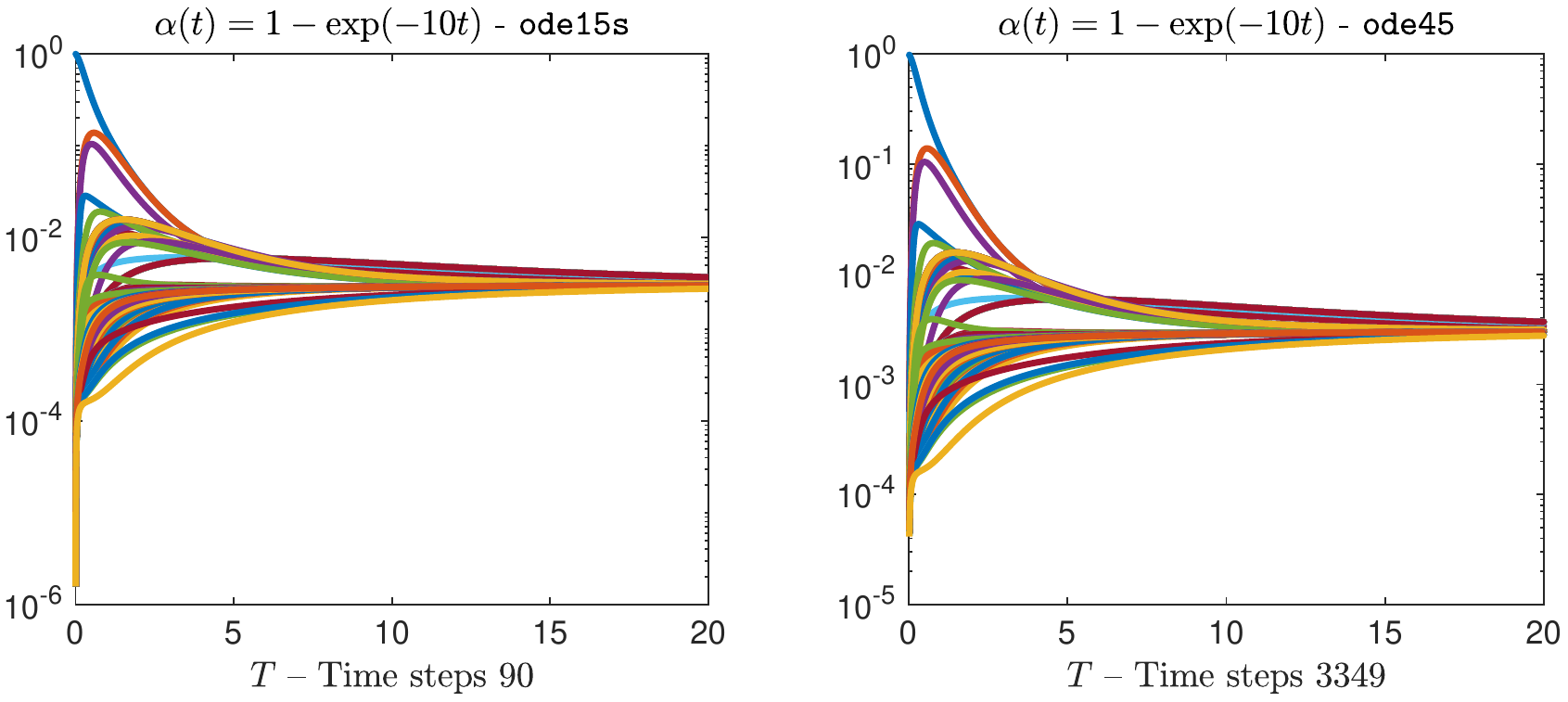}
        \caption{Comparison of short term results of US Airlines in 1997 integrated using
            \texttt{ode45} and \texttt{ode15s}
            $\alpha(t)=1-\exp(-10t)$. Transition probabilities are depicted in logarithmic
            scale to better highlight the differences between the transitions.
            Source of the graph data:~\cite{nr-aaai15}.}
        \label{fig:4USAir97odestiff}
    \end{figure}

    \begin{figure}[htbp]
        \centering
        \includegraphics[width=\columnwidth]{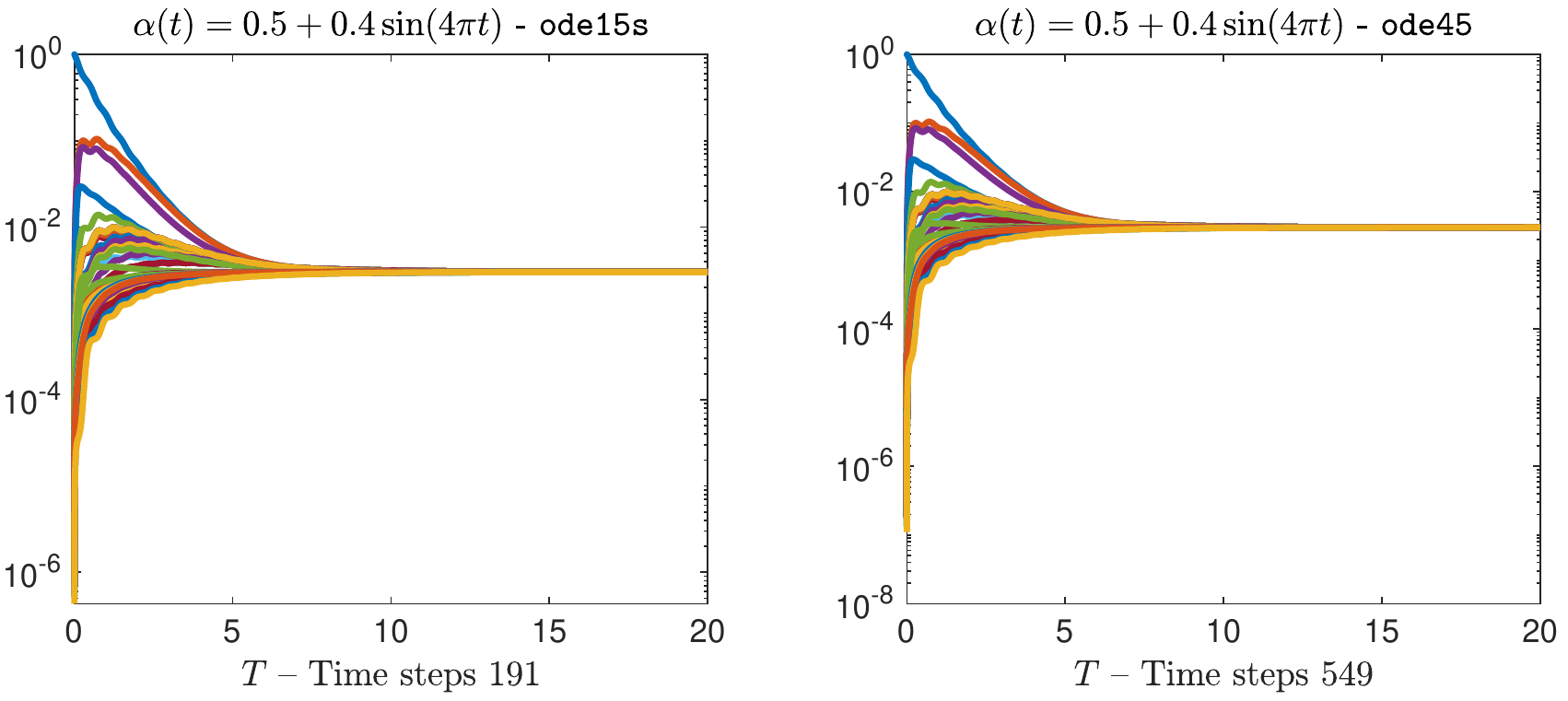}
        \caption{Comparison of short term results of US Airlines in 1997 integrated using \texttt{ode45} and
            \texttt{ode15s},
            $\alpha(t)=0.5+0.4\sin(4\pi t)$. Source of the graph data: \cite{nr-aaai15}.}
        \label{fig:4USAir97odesin-stiff}
    \end{figure}

    To conclude the example, consider using a less regular $\alpha(t)$.
    Specifically, we use a sampling of the function
    $\alpha_{\text{true}}(t) = 0.5 + 0.4 \sin(\pi t/2)$ at points $t_k =
    k$, $k=0,1,\ldots,6$. Then, for integrating the system, we use the
    piecewise polynomial form of the cubic spline interpolating
    $\alpha(t)$ on these data; see Figure~\ref{fig:spline-approx}.
    \begin{figure}[htbp]
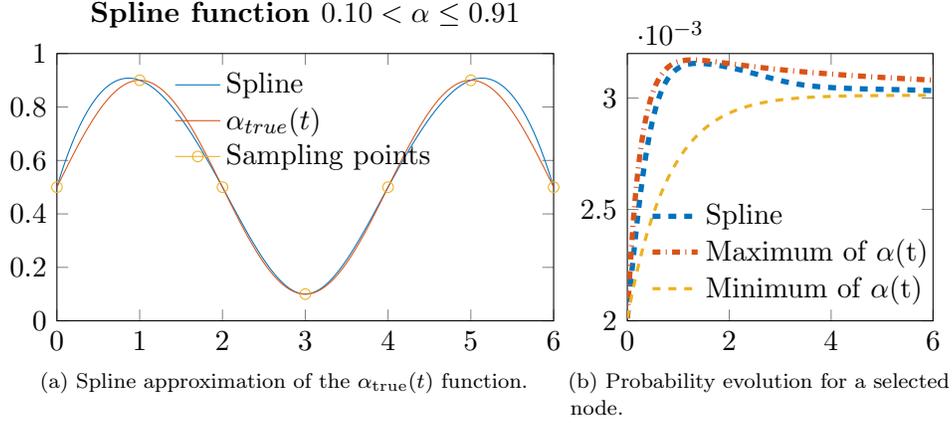

        \centering
        \subfloat[Spline approximation of the $\alpha_{\text{true}}(t)$ function.\label{fig:spline-approx}]{\input{splineapprox.tikz}}
        \subfloat[Probability evolution for a selected node.\label{fig:spline-dynamic}]{\input{spline.tikz}}
        \caption{US Airlines in 1997~\cite{nr-aaai15}. Example of the trajectories obtained by employing
            a function $\alpha(t)$ with reduced regularity. Comparison with the trajectories
            obtained with the constant fractional Laplacian matrix with values the maximum and
            minimum of the $\alpha(t)$ function.}
    \end{figure}
    From Figure~\ref{fig:spline-dynamic}, that represents the
    probability evolution for an arbitrary node of the network, we
    observe again the same behavior shown in the other cases, i.e., the
    probability evolves mimicking the extreme cases, at least in part .

\end{example}

\begin{example}
    Let us now consider a \emph{directed} network. We simulate the
    evolution of the probability distributions on the weighted
    \texttt{cage8} graph from the van Heukelum
    collection~\cite{VANHEUKELUM2002313}. The weights on the adjacency
    matrix describe the transition probabilities between equivalence
    classes of the configurations, for an applied field of $E = (0.1,
    0.1, 0.1)$ and a polymers of $8$ monomers in a cage model of DNA
    electrophoresis. We model the evolution of the probability
    $\mathbf{p}(t)$ for $t \in [0,5]$ for the choices of $\alpha(t)$
    from Figure~\ref{fig:karate1}, and the heat equation dynamics
    from~\eqref{eq:nonautonomous_extension}. That is, we look at the
    transition function $\alpha(t)$ given by the two fixed values, the
    two oscillating functions and the exponential transitions. Since the
    network is now directed, we need to choose which Laplacian matrix we
    adopt. For this case we consider the out-degrees, i.e.,
    $L_{\text{out}}$ in~\eqref{eq:in-and-out-laplacians}.
    \begin{figure}[htbp]
        \centering
        \input{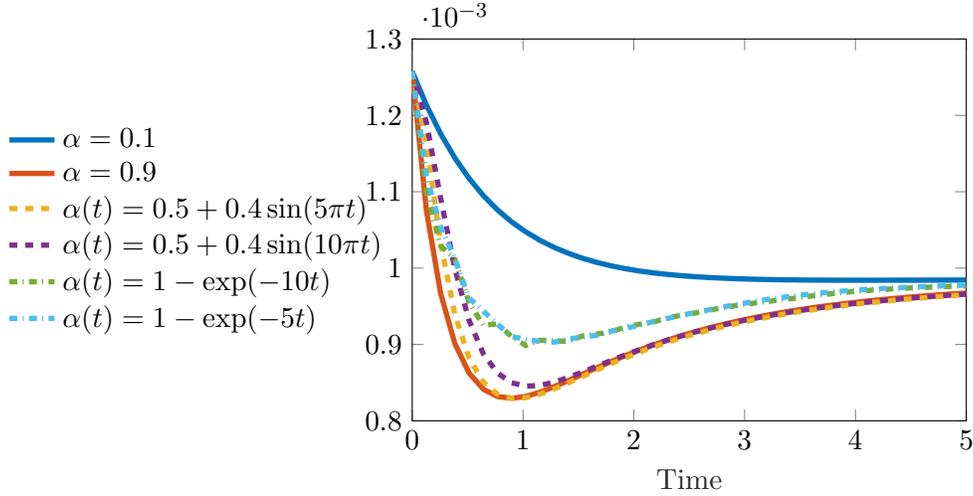}
        \caption{\texttt{Cage8} network~\cite{VANHEUKELUM2002313}. This is a cage model of DNA electrophoresis with $n=1015$ nodes.
            Evolution of the probability $\mathbf{p}_i(t)$ for $i=6$, $t \in [0,5]$ for different choices of $\alpha(t)$ and the Heat equation dynamics from~\eqref{eq:nonautonomous_extension}.\label{fig:cage8}}
    \end{figure}
    To depict a clearer picture, we report in Figure~\ref{fig:cage8} the
    evolution of the probability of a single node. We can observe rapid
    oscillations in the transient phase, while the oscillations settle
    to the steady state more slowly.
\end{example}

\begin{example}
    The last test case we consider is the \emph{EU-Road
        network}~\cite{subelj2011}. This is the international E-road network
    for roads that are mostly located in Europe. The network is
    undirected, with nodes representing cities and links denoting e-road
    between two cities, and is neither \emph{scale-free} nor
    \emph{small-world}. Moreover, since the graph has more than one
    connected components, we restrict the analysis to the largest
    component of the network consisting of 1039 nodes and 1355 edges.

    In the figures in~\ref{fig:euroad.eps}, we report the output and, in
    particular, the evolution of the probability distribution for two
    different runtimes and $\alpha(t)$. We can observe that the
    oscillations reach the steady-state faster than the one in which our
    ``attention span'' rapidly reaches the value 1. Indeed, if we go
    ``one road at a time'' across the whole of Europe, we need
    potentially much more time to explore every place.
    \begin{figure}[htbp]
        \centering
        \includegraphics[width=\columnwidth]{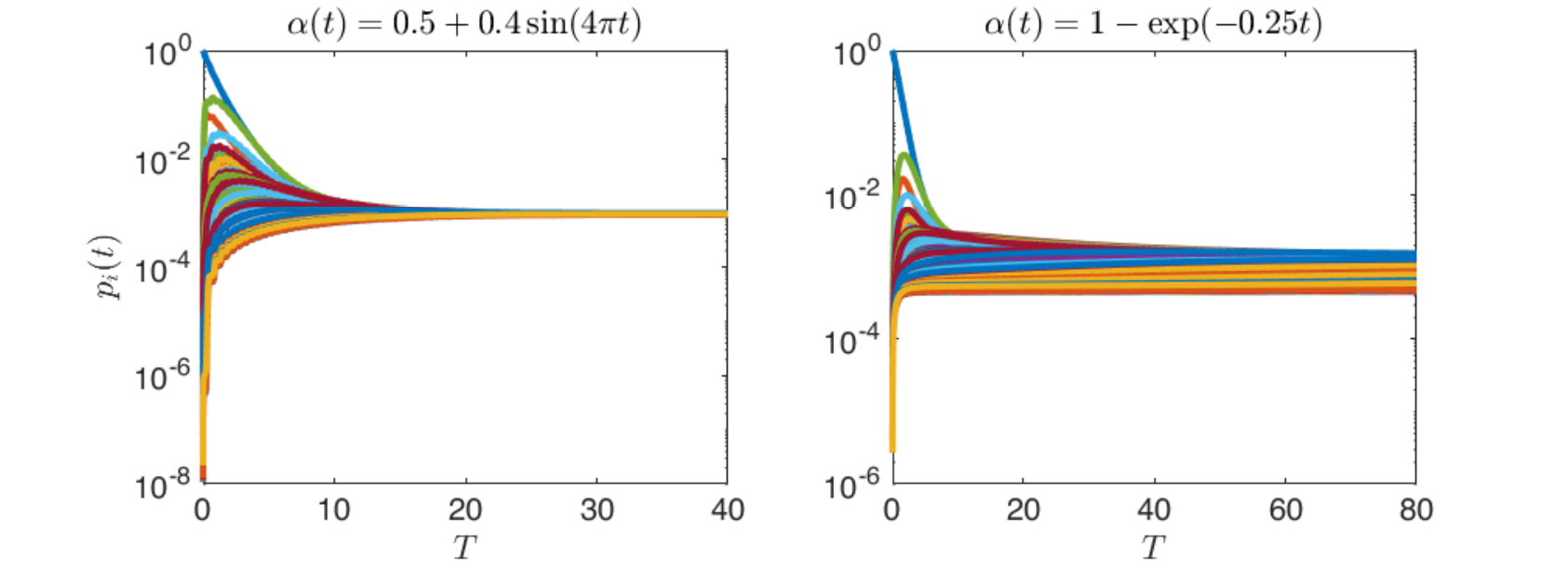}
        \caption{Euroad network. Source of the graph data: \cite{nr-aaai15,subelj2011}.}
        \label{fig:euroad.eps}
    \end{figure}
\end{example}

\section{Conclusions}\label{sec:conclusions}
{We extended the nonlocal exploration of complex
networks utilizing a time-varying function $\alpha=\alpha(t)$ for
the fractional Laplacian matrix used as the Jacobian of an initial
value problem for the evolution of the underlying probability
distributions
(see~\cite{BenziBertacciniDurastanteSimunec,RiascosPhysRevE,PhysRevE.102.022142}).
In particular, we proved existence, uniqueness, and asymptotic
stability properties of the solution of a model based on the heat
equation. Interesting (moderate) stiffness phenomena of the
underlying system of differential equations on some real-world
complex networks are also observed in some numerical experiments
involving simulations of the evolution of the probability
distributions for complex networks.}

There are some directions in which we plan to extend this study. The
first is the generalization to the non-autonomous version of the
transformed $k$-path Laplacian briefly discussed in
Remark~\ref{rmk:k-path-laplacian}, even if the formulation of this
case can be achieved straightforwardly by modifying the $k$-path
operator to
\begin{equation*}
\mathcal{L}_G(\alpha(t);t) =  L_1 + \sum_{k \geq 2} \frac{1}{k^{\alpha(t)}} L_k, \quad \alpha : \mathbb{R}_+ \rightarrow \mathbb{R}_+.
\end{equation*}
This extension does not falls under the observation in
Remark~\ref{rmk:matrix_function_are_simple}, since the matrices
$\mathcal{L}_G(\alpha(t);t)$, even in the symmetric case, cannot be
diagonalized by the same transformation in general.

The second is to (i) extend the approach in~\cite{MR3989621} by
working with a variable order integrator and the singular M-matrix
$L$, by applying some of the techniques proposed
in~\cite{benzi2020rational} and (ii) use computationally efficient
techniques to solve \eqref{eq:thingtosolve} generalizing those
proposed in~\cite{BertacciniDurastanteEnumath19}.

\section*{Acknowledgements}

We would like to thank the referees for their appreciation and
constructive comments. In particular, the
papers~\cite{agarwal,Agarwal2021,Rajchakit2021,Boonsatit2021,Rajchakit2021b}
have been suggested by one of the referees.

\section*{Funding}

The authors are partially supported by INDAM-GNCS and by the INdAM-GNCS project
``Metodi numerici per l'analisi  di modelli innovativi di reti complesse'' CUP E55F22000270001. D. Bertaccini
acknowledges the MIUR Excellence Department Project awarded to the
Department of Mathematics, University of Rome Tor Vergata, CUP
E83C18000100006 and the Tor Vergata University \lq\lq\,Beyond
Borders\rq\rq\ program through the project ASTRID, CUP
E84I19002250005.

\bibliographystyle{tfs}
\bibliography{nonautonomousbibliography}

\end{document}